\theoremstyle{plain}
\newtheorem{mainthm}{Theorem}
\newtheorem{theorem}{Theorem}[section]
\newtheorem{proposition}[theorem]{Proposition}
\newtheorem{lemma}[theorem]{Lemma}
\theoremstyle{definition}
\newtheorem{definition}[theorem]{Definition}
\newtheorem{remark}[theorem]{Remark}
\newcommand{\ph}{\mathcal{PH}_{\omega}}
\newcommand{\diff}{\operatorname{Diff}}
\newcommand{\dw}{\operatorname{Diff}^1_{\omega}(M)}
\newcommand{\per}{\operatorname{Per}}
\newcommand{\ang}{\operatorname{ang}}
\newcommand{\inte}{\operatorname{int}}
\newcommand{\cl}{\operatorname{cl}}
\newcommand{\N}{\mathbb{N}}
\newcommand{\R}{\mathbb{R}}
\newcommand{\eps}{\varepsilon}
\title[$C^1$-Genericity of Symplectic Diffeomorphisms]{$C^1$-Genericity of Symplectic Diffeomorphisms and Lower Bounds for Topological Entropy}
\author{Thiago Catalan and Vanderlei Horita}
\begin{document}

\begin{abstract}
There is a $C^1$-residual (Baire second class) subset $\mathcal{R}$ of
symplectic diffeomorphisms on $2d$-dimensional manifold, $d\ge 1$, such that
for every non-Anosov $f$ in $\mathcal{R}$ its topological entropy is lower
bounded by the supremum of the Lyapunov exponents of their hyperbolic periodic
points in the \emph{unbreakable central subbundle} (i.e., central direction with
no dominated splitting) of $f$.
The previous result deals with the fact that for $f$ in a residual set $\tilde{\mathcal{R}}$
of symplectic diffeomorphisms (containing $\mathcal{R}$) satisfies a trichotomy:
or $f$ is Anosov or $f$ is robustly transitive partially hyperbolic with
{\em unbreakable center} of dimension $2m$, $0 < m < d$, or $f$ has totally
elliptic periodic points dense on $M$.
In the second case, we also show the existence of a sequence of $m$-{\em elliptic}
periodic points converging to $M$.
Indeed, $\tilde{\mathcal{R}}$ contains an open and dense subset.
\end{abstract}

\maketitle

\let\thefootnote\relax\footnote{2000 {\it Mathematics Subject Classification}.
Primary 37J10, 37D30, 37B40, 37C20.}
\let\thefootnote\relax\footnote{{\it Key words and phrases}.
Partially hyperbolic symplectic systems, topological entropy, elliptic periodic points,
homoclinic tangency, generic properties.}
\let\thefootnote\relax\footnote{Work partially supported by CNPq, FAPEMIG,
FAPESP, and PRONEX.}

\section{Introduction}
\label{s.introduction}

The concept of topological entropy of a dynamical system provides information
about its complexity and it is invariant by conjugacy.
Topological entropy is a positive real number that, roughly, measures the rate
of exponential growth of the number of distinguishable orbits with finite but
arbitrary precision as time advances.
Precisely, let $(X, d)$ be a compact metric space and $f \colon X \to X$ be a
continuous map.
For each natural number $n$, we define the metric
$$
    d_n(x,y)=\max\{d(f^i(x),f^i(y)): 0\leq i\leq n\}.
$$
Note that, given any $\varepsilon > 0$ and $n \ge 1$, two points of $X$ are
$\varepsilon$-close with respect to this metric if their first $n$ iterates
are $\varepsilon$-close.
A subset $E$ of $X$ is said to be $(n, \varepsilon)$-{\em separated} if each
pair of distinct points of $E$ is at least $\varepsilon$ apart in the metric
$d_n$.
Denote by $N(n, \varepsilon)$ the maximum cardinality of an
$(n,\varepsilon)$-separated set.
The {\em topological entropy} of the map $f$ is defined by
$$
    h_{top}(f)=\lim_{\varepsilon\to 0} \left(\limsup_{n\to \infty}
    \frac{1}{n}\log N(n,\varepsilon)\right).
$$
Recall that the limit defining $h_{top}(f)$ always exists in the extended
real line (but could be infinite).

Lyapunov exponents are another useful tool to measure complexity
of a dynamical system. They are important constants to measure the asymptotic
behavior of dynamics in the tangent space level. Positive Lyapunov exponents
indicate orbital divergence and long-term unpredictability of a dynamical
system because the omnipresent uncertainty in determining its initial state
grows exponentially fast in time. In other words, Lyapunov exponents tell us
the rate of divergence of nearby trajectories.
More precisely, given a diffeomorphism $f$ over a manifold $M$, we say that
a real number $\lambda(x)$ is a {\it Lyapunov exponent} of $x\in M$ if there
exists a nonzero vector $v\in T_xM$ such that
$$
\lim_{n\rightarrow \pm\infty} \frac{1}{n}\log \|Df^n(x) \ v\|=\lambda(x).
$$

The main result in this work, Theorem~\ref{t.entropy}, relates these two
different ways to measure the complexity of a system.
Roughly, we provide lower bounds of the topological entropy for a class of
symplectic systems using Lyapunov exponents of the hyperbolic periodic points
in the central direction, i.e. taking $v$ above in the central subbundle.
Hence, we obtain an estimate to topological complexity of symplectic
systems via differential properties of its hyperbolic periodic points
for a class of symplectic diffeomorphisms.
By class we mean a residual subset of $\dw$ in the complement of Anosov
diffeomorphism set.
Let us make precise the statements.

\smallskip

We say that a diffeomorphism $f \colon M \to M$ is {\it partially hyperbolic}
if there exists a continuous $Df$-invariant splitting
$TM=E^s\oplus E^c\oplus E^u$ with non trivial extremal sub-bundles
$E^s$ and $E^u$, such that for every $x\in M$ and every $n$ large enough:
\begin{itemize}
\item the splitting is \emph{dominated}:
$$
\|Df^n|E^i(x)\| \ \|Df^{-n}| E^j(f^n(x))\|\leq \displaystyle\frac{1}{2},
\ \text{ for any } (i,j)=(s,c),\ (s,u), \ (c,u); \text{ and}
$$
\item the extremal subbundles are hyperbolic:
$$
 \|Df^n|E^s(x)\|\leq \displaystyle\frac{1}{2} \quad \text{ and }\quad
 \|Df^{-n}|E^u(x)\|\leq\displaystyle\frac{1}{2}.
$$
\end{itemize}
We say that a partially hyperbolic diffeomorphism $f \in \dw$ has
{\em un\-brea\-kable center bundle} if the center bundle $E^c$ has no dominated
sub-splitting for $f$.
If the center bundle $E^c$ is trivial then $f$ is hyperbolic, that is,
$f$ is an {\it Anosov diffeomorphism.}

Here, $(M^{2d},\omega)$ denotes a compact, connected, and boundaryless
symplectic manifold with dimension $2d$ and $\diff^1_{\omega}(M^{2d})$
denotes the set of $C^1$-diffeomorphisms on $(M^{2d},\omega)$ that
preserve the symplectic form $\omega$.
Recall that a partially hyperbolic symplectic diffeomorphism has even
dimensional unbreakable center.
So, we can split the set of partially hyperbolic diffeomorphisms in
subsets according to the dimension of their unbreakable center bundle.
We denote by $\ph^1(m) \subset \diff^1_\omega(M^{2d})$, $0 < m < d$, the
set of partially hyperbolic diffeomorphisms with $\dim(E^c) = 2m$.
For convenience, we denote by $\mathcal{PH}_{\omega}^1(0)=\mathcal{A}$
the subset of Anosov diffeomorphisms and by $\mathcal{PH}^1_{\omega}(d)$
the complement of the closure of the union of the set of all Anosov and
all partially hyperbolic diffeomorphisms.
Note that $\mathcal{PH}_{\omega}^1(i)$ and $\mathcal{PH}_{\omega}^1(j)$
are disjoint subsets for every distinct $0\leq i, j\leq d$.
Moreover, they split $\diff^1_{\omega}(M^{2d})$.

In order to state the first result, let us recall the definition of
elliptic periodic points.
Let $\per(f)$ be the set of periodic points of $f$ in
$\diff^1_\omega(M^{2d})$.
We say that $p \in \per(f)$ of period $k$ is an {\em $m$-elliptic
periodic point}, $0< m \le d$, if $Df^{k}(p)$ has exactly $2m$ non-real
and simple eigenvalues of modulus one, and all other eigenvalues has
modulus different from $1$.
Here, a $d$-elliptic periodic point is called {\it totally elliptic
periodic point}.

Recall that, as hyperbolic periodic points, $m$-elliptic periodic points
are robust for symplectic diffeomorphisms.
Also, if $f$ is a partially hyperbolic diffeomorphism and has an
$m$-elliptic periodic point then $\dim E^c$ must be larger than $2m$.
In particular, from the continuity of the partially hyperbolic splitting
and the robustness of $m$-elliptic periodic points for symplectic
diffeomorphisms it follows that every $f\in \ph^1(m)$ having an
$m$-elliptic periodic point belongs to the interior of $\ph^1(m)$.

In \cite{ArBC}, Arnaud, Bonatti, and Crovisier show that a generic
partially hyperbolic symplectic diffeomorphism $f\in \ph^1(m) \subset
\diff^1_\omega(M^{4})$, $1 \le m \le 2$, must have $m$-elliptic periodic
points dense on $M$.
They also conjectured that the same is true in $\diff^1_\omega(M^{2d})$ for
any $d \ge 1$.
The next result provides a positive answer to the conjecture.

\begin{mainthm}
\label{t.trichotomy}
There exists a residual subset $\tilde{\mathcal{R}} \subset \dw$, such that if
$f\in \tilde{\mathcal{R}}$ one of the following properties happens:
\begin{itemize}
\item[a)] $f$ is an Anosov diffeomorphism;
\item[b)] $f$ is a robust transitive partially hyperbolic diffeomorphism
in $\ph^1(m)$, for some $0 < m < d$, and there is a sequence of
$m$-elliptic periodic points converging to $M$ (in the Hausdorff
topology);
\item[c)] $f$ is not partially hyperbolic and has a sequence of totally
elliptic periodic points converging to $M$ (in the Hausdorff topology).
\end{itemize}
\end{mainthm}

In particular, generically the absence of totally elliptical periodic
points implies some level of (uniform/partial) hyperbolicity.

Let us recall some previous related results.
Newhouse in \cite{N2} shows that in the complement of the set of Anosov
symplectic diffeomorphisms (in $\diff_\omega^1(M^{2})$) there is a
residual subset of symplectic diffeomorphisms exhibiting $1$-elliptic
periodic points dense on $M$.
Arnaud \cite{A} prove the existence of an open and dense subset of
$\diff_\omega^1(M^{4})$ that or $f$ is Anosov, or $f$ is partially
hyperbolic, or $f$ has a totally elliptic periodic points on $M$.
These result was extended in two direction\,: in the first one,
Saghin and Xia \cite{SX} generalize to $\diff_\omega^1(M^{2d})$, for any
$d \ge 1$, this result also follows from Horita and Tahzibi \cite{HT}.
In the second direction, Arnaud, Bonatti, and Crovisier \cite{ArBC} has
a $4$-dimensional version of our Theorem~\ref{t.trichotomy}, as we
already mentioned.

\medskip

Now, let us address to topological entropy of non-Anosov maps.
We denote by $\tau(p,f)$ the period of a periodic point $p$ of $f$.
For $f \in \dw$ and $p$ a periodic point with some eigenvalue with modulus
different to $1$ we define
$$
\lambda_{min}(p,f) = \min \{ |\lambda| \colon  \lambda \text{ is an
eigenvalue of } Df^{\tau(p,f)}(p)\text{ with }|\lambda| >1\}.
$$
and if $f$ has hyperbolic periodic point we define
\begin{equation}
s(f) = \sup \left\{ \frac{1}{\tau(p,f)} \log \lambda_{min}(p,f) \colon \;
p \text{ hyperbolic periodic point of } f \right\}.
\end{equation}
Recall that generically (i.e. for a residual subset) symplectic
diffeomorphisms has hyperbolic periodic points (in fact they are dense).
So, $s(f)$ is well defined for $f$ in a residual subset of $\dw$.

Newhouse in \cite{N3} relates the topological entropy and $s(\cdot)$ for
certain non-Anosov symplectic diffeomorphisms on surfaces.
Recently, Catalan and Tahzibi in \cite{CT} generalize for symplectic
diffeomorphisms on any $2d$-dimensional manifold.

\begin{theorem} [\cite{N3} for $d=1$ and \cite{CT} for any $d\ge 1$)]
There is a residual subset $\mathcal{R} \subset \diff^1_\omega(M^{2d})$ of
$C^1$ symplectic  diffeomorphisms in $M$, such that for every non-Anosov
diffeomorphism $f \in \mathcal{R}$, we have
$$
h_{top}(f) \ge s(f).
$$
\label{NeCT}\end{theorem}

Moreover, Catalan and Tahzibi in \cite{CT} obtain stronger estimate for
symplectic diffeomorphisms on surface\,: for a generic non-Anosov surface
symplectic diffeomorphism $f$ one has $h_{top}(f)=s(f)$.

Roughly, in the proof of Theorem~\ref{NeCT}, the authors use the lack of
hyperbolicity to obtain the estimate.
Here we are able to use the lack of partial hyperbolicity getting better
estimates to topological entropy.
Let us be make this precise.

Let $A: V \rightarrow V$ be a linear operator defined on a vector space
$V$ and let $E\subset V$ be an $A$-invariant subspace.
We denote by $\sigma(A|E)$ the \emph{spectral  radius} of $A$ restrict
to $E$.
Hence, given $f\in \ph^1(m)$ in $0 < m \le d$, we define
$$
S_m(f)=\sup\left\{\frac{1}{\tau(p,f)}\log\sigma(Df^{\tau(p,f)}|E^c(p))
\colon p\text{ hyperbolic periodic point of } f \right\}.
$$

Let us remark that, according to Theorem~\ref{t.trichotomy}, for a
residual subset of $\diff_\omega^1(M^{2d})$, a non-Anosov diffeomorphism
$f$ belongs to a subset $\ph^1(m)$, for some $0 < m \le d$.
Indeed, it holds for an open and dense subset of $\tilde{\mathcal{R}} \setminus
\ph^1(0)$.
So, if $f \in \diff_\omega^1(M^{2d})$ is a generic non-Anosov
diffeomorphism then $S_m(f)$ is defined for some $m$.
Clearly, if  $0<m\leq d$ then $S_m(f)\geq s(f)$.
In fact, for $f \in \ph^1(1)$ the equality holds, i.e., $S_1(f)=s(f)$.
It is not difficult to show that $S_m(\cdot)$, $0<m\leq d$, is a lower
semicontinuous function as $s(.)$ is, see Section~\ref{s.bounds-entropy}.

\smallskip

The main result in this paper provides lower bounds for the topological
entropy of a non-Anosov symplectic diffeomorphism.

\begin{mainthm}
\label{t.entropy}
There exists a residual subset $\mathcal{R}\subset \diff^1_\omega(M^{2d})$,
$d\ge 1$, such that if $f\in\mathcal{R} \cap \ph^1(m)$, $0< m \le d$, then
$$
h_{top}(f)\geq S_m(f).
$$
\end{mainthm}

It is worth to remark that, generically, in the lack of any kind of
uniform or partial hyperbolicity, i.e., for generic $f$ in $\ph^1(d)$,
the previous result yields a lower bound to topological entropy in terms
of the supremum of the largest Lyapunov exponent of all hyperbolic
periodic points of $f$.

\medskip

The paper is organized as follows, in Section~\ref{s.preliminaries} we
recall and provide some useful perturbative results in the symplectic
scenario as connecting lemma, Franks Lemma, and linear systems with
transitions.
Section~\ref{s.proof_t.trichotomy} is devoted to prove
Theorem~\ref{t.trichotomy}.
Using periodic linear systems we show in Section~\ref{s.bounds-entropy},
Lemma~\ref{exp}, how to perturb a symplectic diffeomorphism in order to
find a nice periodic point, namely a diagonalizable periodic point, having
Lyapunov exponents close to the ones of a previous arbitrary periodic
periodic point.
Furthermore, in Proposition~\ref{p.diagonalizable}, we show that $S_m(f)$
can take account just diagonalizable hyperbolic periodic points and
using a technical result, Proposition~\ref{p.estimate}, we complete the
proof of Theorem~\ref{t.entropy}.
We obtain,
in Section~\ref{s.nice_invariant_manifolds}, intersections between
{\emph strong stable and unstable manifolds} of diagonalizable periodic
points with small angles, Lemma~\ref{lemma1} and Lemma \ref{lemma2}.
These lemmas are essential to prove Proposition~\ref{p.estimate} in
Section~\ref{section5}.

\medskip

We end up this section giving a sketch of the proof of the main theorems.
We point out what we should overcome from the technics used in \cite{N3}
and \cite{CT} in order to prove Theorem~\ref{t.entropy}.
Also, we put how Theorem~\ref{t.trichotomy} follows from the technics
developed in order to prove Theorem~\ref{t.entropy}.

Let us recall key points in the proof of Theorem~\ref{NeCT}.
An essential point is that in the symplectic scenario the Palis
conjecture is known, more precisely, symplectic diffeomorphisms either are
approximated by symplectic Anosov diffeomorphisms or by diffeomorphisms
exhibiting homoclinic tangencies, see Newhouse \cite{N2}.
Another essential point is also due to Newhouse that show how construct
perturbation of a symplectic surface diffeomorphism $f$ exhibiting a
homoclinic tangency for a hyperbolic periodic point $p$, in order to
create a basic hyperbolic set having topological entropy arbitrary close
to the (unique) positive Lyapunov exponent of $p$ for $f$.
This kind of perturbation is called here {\it snake perturbation}.
To prove Theorem~\ref{NeCT} for symplectic diffeomorphisms in higher
dimension, Catalan and Tahzibi developed higher dimensional snake
perturbations.
The hyperbolic basic set obtained after a snake perturbation, has
topological entropy close to the smallest positive Lyapunov exponent of a
periodic periodic point $p$.
This is because of the natural $Df^{\tau(p)}$-invariant dominated
splitting in $T_pM$ given by the eigenspaces of $Df^{\tau(p)}(p)$.
This is the reason that Theorem~\ref{NeCT} provide lower bounds for
topological entropy in terms of the smallest positive Lyapunov
exponents of all hyperbolic periodic points.

Hence, to prove Theorem~\ref{t.entropy} following the program of
Theorem~\ref{NeCT}, we need perturb the diffeomorphism to build up,
for a hyperbolic periodic point $p$ of a $f$, a basic hyperbolic sets
associated to it with entropy close to an other Lyapunov exponents of
$p$.
To overcome this point, we find an $f$-invariant symplectic submanifold
$D\subset M$ containing $p$, such that the stable and unstable manifolds
of $p$ inside $D$ has a non-transversal intersection in $D$, and thus we
use snake perturbations of $f$ inside $D$, to construct a basic hyperbolic
set having entropy close to the smallest Lyapunov exponent of $p$ for $f$
restrict to $D$.

One of the steps in order to find the submanifold $D$, is the existence
of a periodic point $p$ having all central eigenvalues equal to one, after
a perturbation.
This result is due to Horita and Tahzibi \cite{HT}.
Theorem~\ref{t.trichotomy} is a consequence of this fact.

\section{Preliminaries}
\label{s.preliminaries}
In this section we recall some techniques and provide results that we use
along the proofs of Theorems~\ref{t.trichotomy} and \ref{t.entropy}.
They encompass perturbations of linear symplectic transformation,
connection of invariant manifolds, and periodic symplectic linear systems
with transitions.

\subsection{Linear symplectic perturbations}
\label{ss.linear}
First, let us recall some basic facts about symplectic vector spaces.
Let $(V, \omega)$ be a symplectic vector space of dimension $2d$.
For any subspace $W \subset V$ we define its {\it symplectic orthogonal}
vector space as
$$
  W^{\omega} = \{  v \in V; \omega(v, w) = 0 \quad \text{for all}
  \quad w \in W \}.
$$
The subspace $W$ is called {\it symplectic} if $W^{\omega} \cap
W = \{0\}$.
We say $W$ is {\it isotropic} if $W \subset W^{\omega},$ that is
$\omega|(W\times W) =0$.
When $W = W^{\omega}$ we say that $W$ is a {\it Lagrangian}
subspace.

For a symplectic form $\omega$ in $V$, there is a symplectic basis
$\mathcal{B}=\{e_1,\ldots, e_{2d}\}$ of $V$ such that, with respect
to this basis, $\omega$ is in the standard form
$\omega=\sum_{i=1}^d de_i \wedge de_{i+d}$, i.e.,
$\omega(e_i, e_{d+i})=1$ and $\omega(e_i,e_j)=0$ if $j\neq d+i$,
for every $1\leq i\leq d$.
Now, if $J$ is the canonical map on $V$, with respect to $\mathcal{B}$,
such that $J^2=-Id$,  we say that a {\it linear map $A$ is symplectic}
if $A^*JA=J$.
In particular, $A$ is a symplectic map if, and only if, $A^*\omega=\omega$.
Note that, if we take an inner product on $V$ for which $\mathcal{B}$ is
an orthonormal basis, then $\omega(u,v)=<u ,\ Jv>$.

Given two vector subspaces $E$ and $E'$ of a vector space $V$ endowed with
a inner product, $\dim E = \dim E' = j$, we say that $E$ and $E'$ are
\emph{$\delta$-close} if there exists orthonormal basis
$\{e_1, \dots , e_j \}$ of $E$ and $\{e_1', \dots , e_j' \}$ of $E'$ such
that $\max\{ \|e_i - e_i' \| \colon 1 \le i \le j \} < \delta$, where
$\| \cdot \|$ is induced by the inner product.
For any pair of $E$ and $E'$ of vector subspaces of same dimension it is
trivial to find a linear isomorphism $A$ such that $A(E') = E$.
Moreover, if $E$ and $E'$ are close then $A$ can be choosen close to the
identity.
Next lemma asserts that if $E$ and $E'$ are close then $A$ can be
taken symplectic and preserving a complementary space of $E$.

In the reminder of this section, for sake of simplicity we denote
$V=(V,\omega)$ a symplectic vector space.

\begin{lemma}
\label{isotropic2}
Suppose $V=E\oplus F$, where $E$ is an isotropic subspace.
For any $\eps>0$, there exists $\delta>0$ such that if $W\subset V$ is an
isotropic subspace $\delta$-close to $E$, then there exists a symplectic
linear map $B$ on $V$ $\eps$-close to $Id$ such that $B(W)=E$ and
$B|F=Id_F$.
\end{lemma}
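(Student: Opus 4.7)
The plan is a direct linear-algebraic construction: parametrize $W$ as a graph over $E$, extract the content of the isotropy hypothesis, and then build $B$ inside the group of symplectic maps fixing $F$ pointwise.

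Since $W$ has the same dimension as $E$ and is $\delta$-close to $E$, for $\delta$ sufficiently small $W$ is transverse to $F$, and one may write it uniquely as the graph $W=\{e+\phi(e):e\in E\}$ of a linear map $\phi\colon E\to F$ with $\|\phi\|$ of order $\delta$. Expanding $\omega(e_1+\phi(e_1),\,e_2+\phi(e_2))=0$ and using that $E$ is isotropic yields
\[
\omega(e_1,\phi(e_2))+\omega(\phi(e_1),e_2)+\omega(\phi(e_1),\phi(e_2))=0,
\]
whose first-order content is that $(e_1,e_2)\mapsto\omega(e_1,\phi(e_2))$ is a symmetric bilinear form on $E\times E$. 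This symmetry is the key algebraic consequence of the isotropy of $W$ that the construction will exploit.

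Next, I seek $B=\exp(X)$ with $X\in\mathfrak{sp}(V,\omega)$ satisfying $X|_F=0$. The symplectic condition on $X$ then forces $X(E)\subset F^{\omega}$ together with $\omega(Xu,v)+\omega(u,Xv)=0$ for all $u,v\in E$. Because the pairing $\omega\colon F^{\omega}\times E\to\R$ is non-degenerate (its radical is $F^{\omega}\cap E^{\omega}=(F+E)^{\omega}=\{0\}$), such infinitesimal generators $X$ correspond bijectively to symmetric bilinear forms on $E$. The symmetry derived in the previous paragraph is exactly what is needed to pick an $X$ for which $\exp(X)$ sends the graph of $\phi$ into $E$ to first order in $\delta$.

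Finally, a Newton-type iteration upgrades the first-order solution to an exact one. Writing $B_1=\exp(X_1)$, the image $B_1(W)$ is isotropic and at distance $O(\delta^2)$ from $E$; repeating the construction with $B_1(W)$ in place of $W$ yields $B_2$ with $B_2B_1(W)$ at distance $O(\delta^4)$ from $E$, and so on. The compositions converge in the group $G=\{A\in\mathrm{Sp}(V,\omega):A|_F=\mathrm{Id}\}$ to the desired $B$, with $\|B-\mathrm{Id}\|$ controlled by $\|\phi\|=O(\delta)$. The main obstacle is checking that the isotropy identity survives each partial step of the iteration, so that the symmetric-form solvability from the previous paragraph keeps applying and the iteration closes cleanly within $G$.
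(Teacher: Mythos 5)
Your route is genuinely different from the paper's. The paper gives a one-step, closed-form construction: writing $W$ as the graph of a linear map $A\colon E\to F$, it sets $B(x,y)=(x,\,y-Ax)$ in the coordinates of $V=E\oplus F$, checks immediately that $B|_F=\mathrm{Id}$ and $B(W)=E$, and then verifies $B^*\omega=\omega$ by a short pullback computation that uses the isotropy of $E$ and of $W$ (via $i^*\omega=j^*\omega=0$ for the inclusions of $E$ and of $W$ as graphs). No exponential map, no Lie algebra analysis, and in particular no Newton iteration is needed: the explicit formula already solves the problem exactly, not just to first order. Your Lie-algebraic description of $\mathfrak{h}=\{X\in\mathfrak{sp}(V,\omega):X|_F=0\}$ as symmetric bilinear forms on $E$ (via the non-degenerate pairing $\omega\colon F^\omega\times E\to\R$) is a correct and pleasant observation, but it replaces a one-line verification with an infinite iteration whose convergence and norm control then have to be established.

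There is also a substantive gap in your linearized step, and it is structural rather than technical. You assert that the symmetry of $(e_1,e_2)\mapsto\omega(e_1,\phi(e_2))$ is ``exactly what is needed'' to choose $X$ with $\exp(X)W$ first-order tangent to $E$. But solvability is a range question, not only a symmetry question: to first order one needs $\phi(e)+Xe\in E$ for every $e\in E$, hence $\phi(e)\in E+F^\omega$, whereas $\phi$ takes values in $F$ and your own constraint confines $Xe$ to the $k$-dimensional space $F^\omega$. When $\dim E<d$ one typically has $E+F^\omega\subsetneq V$, and then the equation has no solution for generic isotropic $W$ near $E$. For a concrete obstruction, take $V=\R^4$ with symplectic basis $e_1,\dots,e_4$ (so $\omega(e_1,e_3)=\omega(e_2,e_4)=1$), $E=\R e_1$, $F=\mathrm{span}(e_2,e_3,e_4)$; then $F^\omega=\R e_3$, and for the (automatically isotropic) line $W=\R(e_1+be_2)$ with $b\neq 0$ small, any symplectic $B$ fixing $F$ pointwise satisfies $Be_1-e_1\in F^\omega$, so $BW$ cannot lie in $E$. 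Your iteration therefore cannot even start in this case. So the argument, as written, implicitly requires an extra compatibility between $F$, $F^\omega$, and $W$ (for instance $F$ isotropic, which covers the Lagrangian case $\dim E=d$); you should identify and state that hypothesis explicitly rather than relying on the first-order symmetry alone.
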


\begin{proof}
We use here coordinates $(x,y)$ in $V$ with respect to the decomposition
$V=E\oplus F$, and we fix an arbitrary norm $\| \cdot \|$ in $V$.

Since $W$ is close enough to $E$, there exists a linear map
$A \colon E \rightarrow F$  such that $W=\{(x,Ax) \colon x\in E\}$.
Moreover, given $\eps>0$ we can choose $\delta>0$ small enough such that
if $W$ is $\delta$-close to $E$, then $\|A\|<\eps$.
Thus if we define $j \colon E \rightarrow V$ by $j(x)=(x,A(x))$, since $W$
is an isotropic subspace, we have $j^*\omega=0$, where $j^*\omega$ is the
pull-back of the symplectic form $\omega$ by $j$.
Analogously, if $i \colon E\rightarrow V$ is the natural inclusion,
$i(x)=(x,0)$, we have $i^*\omega=0$.

Finally, if we define $B:V\rightarrow V$ by $B(x,y)=(x,y-A(x))$, then to
conclude the proof of the lemma we just need to show that $B$ is indeed
symplectic, since $\|B-Id\|\leq \|A\|<\eps$, $B(W)=E$ and $B|F=Id_F$.

Hence, let $\pi \colon V\rightarrow E$ be the projection on the first
coordinate, i.e.,  $\pi(x,y)=x$.
We rewrite $B$ as $B=Id+i\circ\pi-j\circ\pi$.
Therefore, we finally can verify that
$$
B^*\omega=\omega+\pi^*i^*\omega-\pi^*j^*\omega=\omega,
$$
where we use that $i^*\omega=j^*\omega=0$ in the second equality.
The proof is finished.
\end{proof}

The next lemma allows to perform perturbations inside a symplectic
subspace, keeping invariant its symplectic orthogonal space.

\begin{lemma}
\label{symplectic}
Let $W\subset V$ a symplectic subspace.
For any $\eps>0$, there exists $\delta>0$, such that if
$A \colon W\rightarrow W$ is a symplectic linear map $\delta$-close to the
identity map $Id|W$, then there exists a symplectic linear map $B$ over
$V$, $\eps$-close to $Id$ such that $B|W=A$ and
$B| W^{\omega}=Id|W^{\omega}$.
\end{lemma}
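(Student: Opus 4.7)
\medskip

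\noindent\textbf{Proof plan.} Since $W$ is symplectic, we have the splitting $V = W \oplus W^{\omega}$, and moreover $\omega$ restricts non-degenerately to each summand while $\omega(w,w') = 0$ for $w \in W$ and $w' \in W^{\omega}$. The natural candidate is simply $B := A \oplus \mathrm{Id}_{W^{\omega}}$, that is, writing every $v \in V$ uniquely as $v = w + w'$ with $w \in W$ and $w' \in W^{\omega}$, set $B(v) := A(w) + w'$. By construction $B|W = A$ and $B|W^{\omega} = \mathrm{Id}$, so only symplecticity and the norm estimate remain.

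For symplecticity, take $v_i = w_i + w_i'$, $i=1,2$, with $w_i \in W$, $w_i' \in W^{\omega}$. Then
\begin{equation*}
\omega(Bv_1, Bv_2) = \omega(Aw_1, Aw_2) + \omega(Aw_1, w_2') + \omega(w_1', Aw_2) + \omega(w_1', w_2').
\end{equation*}
The two cross terms vanish because $Aw_i \in W$ and $w_j' \in W^{\omega}$; the first term equals $\omega(w_1,w_2)$ because $A$ is symplectic on $W$; and the last term equals itself. Adding them gives back $\omega(v_1,v_2)$, so $B^*\omega = \omega$.

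For the size estimate, let $\pi_W$ and $\pi_{W^{\omega}}$ denote the (linear, fixed) projections associated with the decomposition $V = W \oplus W^{\omega}$, and fix an arbitrary norm on $V$. Then $B - \mathrm{Id} = (A - \mathrm{Id}|W) \circ \pi_W$ as operators on $V$, so
\begin{equation*}
\| B - \mathrm{Id} \| \;\le\; \| \pi_W \| \cdot \| A - \mathrm{Id}|W \|.
\end{equation*}
Since $W$ is fixed in advance, $C := \|\pi_W\|$ is a fixed constant, and choosing $\delta := \varepsilon / C$ gives $\|B - \mathrm{Id}\| < \varepsilon$ whenever $\|A - \mathrm{Id}|W\| < \delta$. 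In finite dimension this operator bound is exactly $C^1$-closeness to the identity, so $B$ satisfies all the required properties.

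The only delicate point is making sure the symplectic character survives the extension, and this is dealt with by the splitting $V = W \oplus W^{\omega}$ itself: it is automatically $\omega$-orthogonal, so no cross-terms ever appear. The norm bound is then immediate because both $W$ and $W^{\omega}$ are fixed subspaces and the projections depend only on the ambient geometry, not on $A$. \hfill$\square$
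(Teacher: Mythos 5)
Your proof is correct and is essentially the same construction as the paper's: define $B$ to be $A$ on $W$ and the identity on $W^{\omega}$ with respect to the splitting $V = W \oplus W^{\omega}$. The only difference is presentation: the paper fixes a symplectic basis adapted to the splitting and verifies $B^*JB = J$ as a block-matrix computation, whereas you check $B^*\omega = \omega$ directly from the $\omega$-orthogonality of $W$ and $W^{\omega}$ without choosing coordinates; your coordinate-free argument is arguably cleaner and also makes the $C^1$-estimate more transparent.
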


\begin{proof}
If $W = V$ we are done, so we suppose $\dim \ W=2m < 2d = \dim V$.
Let $\{e_1,\ldots, e_{2m}\}$ be a symplectic basis of $W$ and  let
$\{e_{2m+1}, \ldots, e_{2d}\}$ be a symplectic basis of $W^{\omega}$.
Hence, $\{e_1,\ldots, e_{2d}\}$ is a symplectic basis of $V$.
Let $< \cdot , \cdot>$ be the inner product for which this basis is
orthonormal and thus $\omega(u,v)=<u,Jv>$, where
$$
J=\left[\begin{array}{cc}
J_W& 0 \\ 0 & J_{W^{\omega}}
\end{array}\right],
\text{ for }
J_{\iota}=\left[\begin{array}{cc}
0& Id_\iota \\ -Id_\iota & 0
\end{array}\right], \quad \iota= W, \ W^{\omega},
$$
being $Id_\iota$ the identity matrix of order $m\times m$ if $\iota=W$,
or the $d-m\times d-m$ identity matrix if $\iota = W^{\omega}$.

Since $A$ is a symplectic linear map over $W$, we have $A^* J_W A=J_W$.
Hence, defining
$$
B=\left[\begin{array}{cc}
A& 0 \\ 0 & Id_{W^{\omega}}
\end{array}\right],
$$
we have that $B^* J B=J$, which implies that $B$ is a symplectic linear
map on $V$, where $B|W=A$ and $B|W^{\omega}=Id_{W^\omega}$.

Therefore, for any $\eps>0$ we can choose $\delta>0$ small enough
depending on the symplectic basis fixed at the beginning, such that if $A$
is $\delta$-close to $Id_W$, then the linear map $B$ is $\eps$-close to
$Id$.
The proof is finished.
\end{proof}

Now, we recall a symplectic version of the well-known Franks Lemma in
\cite{F}, which enable us to perform non-linear perturbations along a
finite invariant set, namely a peridic orbit, from linear perturbations
(in particular, from those given in previous lemmas).

\begin{lemma}[Symplectic Franks Lemma]
\label{franks}
Let $f\in \dw$ and let $\mathcal{U}\subset\dw$ be any neighborhood of $f$.
Then, there exists $\delta>0$ and $\mathcal{U}'\subset \mathcal{U}$ a
small neighborhood of $f$ such that given $g\in \mathcal{U}'$, a finite
$g$-invariant set $\{x_1,\ldots, x_N\}$, a neighborhood $U$ of
$\{x_1,\ldots, x_N\}$ and  symplectic linear maps
$A_i \colon T_{x_i}M \rightarrow T_{g(x_i)}M$ such that
$\|A_i-Dg(x_i)\|\leq \delta$ for all $1\leq i\leq N$, then  there is a
symplectic diffeomorphism $\tilde{g}\in \mathcal{U}$ such that
$\tilde{g}(x) =g(x)$ if $x\in \{x_1,\ldots, x_N\}\cup (M \setminus U)$ and
$D\tilde{g}(x_i)=A_i$ for all $1\leq i\leq N$.
\end{lemma}

The proof in \cite{F} can be extended to the symplectic set using
generating functions.

\subsection{Connecting invariant manifolds.}
\label{ss.connecting}
Given a hyperbolic periodic point $p$ of a symplectic diffeomorphism $f$
we define its {\it stable (resp., unstable)  manifold}, $W^s(p,f)$
(resp.,  $W^u(p,f)$), the subset of points in $M$ whose forward (resp.,
backward) orbit by $f^{\tau(p,f)}$ accumulates on $p$.

\begin{remark}
\label{dec.}
If $f$ is a symplectic diffeomorphism over $M$ and $p$ is a hyperbolic
periodic point of $f$, the stable and unstable manifolds of $p$,
$W^{s}(p,f)$ and $W^u(p,f)$, are Lagrangian submanifolds of $M$, that is
$T_xW^{\iota}(p,f)$ is a Lagrangian subspace for every
$x\in W^{\iota}(p,f)$, $\iota=s$ or $u$.
In particular, $E^s(p,f)$ and $E^u(p,f)$ are Lagrangian subspaces of
$T_pM$.
\end{remark}

\begin{remark}
\label{decomp.}
If $f$ is a partially hyperbolic symplectic diffeomorphism over $M$,
$TM=E^{ss}\oplus E^c\oplus E^{uu}$, then we also recall that  $E^{ss}$ and
$E^{uu}$ are isotropic subbundles of $TM$.
Furthermore, any partially hyperbolic splitting for a symplectic
diffeomorphism is such that $E^c$ is a symplectic subbundle of $TM$
satisfying $(E^c(x))^{\omega}=E^{ss}(x)\oplus E^{uu}(x)$, for every $x\in M$.
In particular, $E^{ss}(x)\oplus E^{uu}(x)$ is a symplectic subspace of $T_xM$.
\end{remark}

Now, given a periodic point  $p$ of $f\in\dw$, if there exists a partially
hyperbolic splitting $T_pM=E^{ss}_k\oplus E^c_k\oplus E^{uu}_k$ for
$Df^{\tau(p,f)}$, with $\dim(E^{ss}_k)=\dim (E^{uu}_k)=k$ and
$\dim E^c_k=2d-2k$, then by \cite{HPS}, there exists a $f^{\tau(p,f)}$
(resp., $f^{-\tau(p,f)}$) {\it local invariant $k$-dimensional strong
stable (resp., unstable) manifold} $W_{k, loc}^{ss} (p)$ (resp.,
$W^{uu}_{k,loc}(p)$ ) tangent to $E^{ss}_k\ ( \text{resp., } E^{uu}_k)$ at
$p$ varying $C^1$-continuously  with respect to the diffeomorphism.
Hence, we define the {\it $k$-dimensional strong stable (resp., unstable)}
manifold of $p$ by
$$
W_{k}^{ss} (p)=\bigcup_{n\in \N} f^{-n}(W_{k, loc}^{ss} (p))
\big(\text{resp., } W_{k}^{uu} (p)=\bigcup_{n\in \N}
f^{n}(W_{k, loc}^{uu} (p))\big).
$$

We say that a periodic point $p$ of $f\in\dw$ is {\it diagonalizable} if
$Df^{\tau(p,f)}(p)$ has only real positive eigenvalues with multiplicity
one. If $p$ is a diagonalizable periodic point then for each $0 < k \le d$
the partially hyperbolic splitting
$T_pM=E^{ss}_k\oplus E^c_k\oplus E^{uu}_k$ is defined.
So, for those points $k$-strong invariant manifolds are defined for any
$0 < k \le d$.
Hence, for any diffeomorphism $f\in \dw$ and any diagonalizable hyperbolic
periodic point $p$ of $f$, denoting by
$\lambda_{1,p} < \ldots < \lambda_{2d,p}$ the distinct simple eigenvalues
of $Df^{\tau(p,f)}(p)$ and by $E_{\lambda_{1,p}} \prec \ldots \prec
E_{\lambda_{2d,p}}$ the respective eigenspaces, we set the
$k$-\emph{dimensional strong stable} (resp., \emph{strong unstable})
\emph{subspace} in $T_pM$, $0<k \le d$ as follows
$$
E^{ss}_k(p)=\bigoplus_{1\leq j\leq k} E_{\lambda_{j,p}} \quad
\left(\text{resp., }
E^{uu}_k(p) = \bigoplus_{2d-k+1\leq j\leq 2d} E_{\lambda_{j,p}}
\right).
$$
For analogy, in this case we denote
$$
E^{c}_k(p)=\bigoplus_{k+1\leq j\leq 2d-k} E_{\lambda_{j,p}}.
$$
It is worth to point out that according to the previous definition
$W_{d}^{ss} (p)=W^{s} (p)$ and $W_{d}^{uu} (p)=W^{u} (p)$.

In this work, by {\it $k$-strong homoclinic intersections}, $k < d$ we
mean non-trivial intersections between $W_{k}^{ss} (p)$ and
$W_{k}^{uu} (p)$.
Moreover, if $q$ is a $k$-strong homoclinic intersection such that
$T_qW_{k}^{ss} (p)\cap T_qW_{k}^{uu} (p)=\{0\}$, then we say $q$ is a
{\it $k$-strong quasi-transversal homoclinic intersection}.
To create such intersections we use a symplectic version of Hayashi
connecting lemma \cite{H}, due to Xia and Wen \cite{XW}.

\begin{proposition}[Theorem F in \cite{XW}]
\label{xw}
Let $z\in M$ be a non-periodic point of $f\in \dw$.
For any $C^1$-neighborhood $\mathcal{U}$ of $f$, there are $\rho>1$,
$L\in \N$ and $\delta_0>0$ such that for any $0<\delta<\delta_0$, and for
any point $x$ outside the tube $\Delta=\cup_{n=1}^L f^{-n} B(z, \delta)$
and any point $y\in B(z,\delta /\rho)$, if the forward $f$-orbit of $x$
intersects $B(z, \delta/\rho)$, then there is a symplectic diffeomorphism
$g\in \mathcal{U}$ such that $g$ = $f$ off $\Delta$ and $y$ is on the
forward g-orbit of $x$.
\end{proposition}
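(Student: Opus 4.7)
The plan is to adapt Hayashi's classical connecting lemma to the symplectic category. The overall architecture is unchanged from the dissipative case: exploit non-recurrence of $z$ to build a disjoint tube of backward iterates of a small ball, then distribute one macroscopic displacement over $L$ consecutive iterations inside the tube so that each single perturbation stays $C^1$-small. The new content is in the construction of the individual perturbations, which must preserve $\omega$; everything else is formally identical to the volume-preserving or dissipative versions.

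First, since $z$ is non-periodic, shrinking $\delta_0$ one may assume that for every $\delta<\delta_0$ the balls $B(z,\delta), f^{-1}(B(z,\delta)), \ldots, f^{-L}(B(z,\delta))$ are pairwise disjoint and each is contained in a Darboux chart. Choose $\rho>1$ large depending on the $C^1$-tolerance prescribed by $\mathcal{U}$ and on the local Lipschitz norms of $f^{\pm 1}$ near the backward segment $\{f^{-i}(z)\}_{i=0}^{L}$. The central new tool is a \emph{symplectic local shift}: given $x,y$ in a ball $B$ of radius $r$ with $d(x,y)\le r/\rho$, build a symplectic $\phi$ supported in $B$ with $\phi(x)=y$ and $\|\phi-Id\|_{C^1}\le C/\rho$. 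In Darboux coordinates one takes $\phi$ to be the time-one map of the Hamiltonian flow associated to $H(w)=\chi(w)\,\ell(w)$, where $\ell$ is a linear function chosen so that the constant Hamiltonian vector field of $\ell$ points from $x$ to $y$ with the correct speed, and $\chi$ is a bump equal to one on a concentric subball. The $C^1$-size of $\phi$ is then controlled by $|y-x|/r$ times $\|\chi\|_{C^2}$, and $\phi^*\omega=\omega$ automatically from the Hamiltonian formulation.

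The remaining step is the Hayashi-style selection and distribution. Assuming the positive $f$-orbit of $p$ enters $B(z,\delta/\rho)$ for the first time at some $n_0\ge L$, one needs $g^{n_0}(p)=q$. Decompose the displacement $q-f^{n_0}(p)$ into $L$ small pieces $\xi_i$, each of size $O(\delta/(L\rho))$, transport $\xi_i$ back by $Df^{-i}$ into the $i$-th ball of the tube, and insert one symplectic shift there using the tool above. Outside $\Delta$ put $g=f$. Each individual perturbation contributes $O(1/(L\rho))$ to $\|g-f\|_{C^1}$, so for $L$ and $\rho$ chosen large enough (depending on $\mathcal{U}$) the total stays within the prescribed neighborhood, and by construction $q$ lies on the positive $g$-orbit of $p$.

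The main obstacle is the symplectic perturbation step: a naive cutoff of the translation vector field $w\mapsto y-x$ is \emph{not} symplectic, so the dissipative construction cannot be imported verbatim. The Hamiltonian replacement circumvents this but forces careful bookkeeping on constants, since $\mathcal{U}$ fixes the $C^1$-tolerance, which in turn constrains $\rho$ and $L$, and finally forces $\delta_0$ to be small enough that the whole tube is swept out by Darboux charts in which the cutoff Hamiltonian is well defined and the composition of the $L$ local Hamiltonian perturbations with $f$ remains in $\mathcal{U}$. Once this hierarchy of constants is fixed, the verification reduces to a direct $C^1$-estimate on each Hamiltonian time-one map.
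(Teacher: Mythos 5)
The paper does not prove this proposition; it is Theorem E of Wen and Xia \cite{XW}, imported as a black box, so there is no internal proof to compare against. Your attempt is a reconstruction from scratch, and while the Hamiltonian-cutoff construction of a $C^1$-small compactly supported symplectic shift is correct and standard, the reconstruction skips the step that is in fact the central difficulty of the $C^1$ connecting lemma.

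The gap is in the ``selection and distribution'' paragraph, which carries out the distribution but omits the selection. You fix the first time $n_0$ at which the $f$-orbit of $p$ reaches $B(z,\delta/\rho)$, split $q - f^{n_0}(p)$ into $L$ pieces, and implant one shift in each ring $f^{-i}(B(z,\delta))$ of the tube. But the orbit segment $p, f(p), \ldots, f^{n_0-L}(p)$ may enter $\Delta$ many times before the terminal passage, and any perturbation supported in $\Delta$ modifies the orbit at each such visit; nothing in your construction guarantees that after the first such visit the $g$-orbit of $p$ still lands where your shifts are calibrated. Coping with these repeated passages, by selecting a suitable pair of return times and organizing the perturbation over a tiled-cube structure introduced precisely for this purpose, is the actual content of Wen and Xia's theorem and is what made the $C^1$ connecting lemma a hard problem in the first place. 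A secondary issue: the estimate that each implanted shift costs $O(1/(L\rho))$ in $C^1$ is not automatic, because $f^{-i}(B(z,\delta))$ can be a very eccentric ellipsoid, and a bump Hamiltonian supported there has $C^1$ size governed by its thin axis rather than by $\delta$; even the single-passage estimate requires the rescaling machinery you are trying to bypass.
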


We emphasize that the perturbation $g$ of $f$ in above proposition is a
local perturbation.
That is, $g$ should be different of $f$ only in $\Delta$.

\begin{remark}
\label{r.xw}
Symmetrically, we can restate the previous proposition for a tube along
the positive orbit of $z$, and require that the backward $f$-orbit of $x$
intersects $B(z, \delta / \rho)$, obtaining now that $y$ belongs to the
backward $g$-orbit of $x$.
\end{remark}

Next result is a consequence of Proposition~\ref{xw} which permit us
create $k$-strong homoclinic intersections.

\begin{lemma}
\label{conexao}
Let $f\in \dw$ and let $p,q$ be either hyperbolic or $m$-elliptic
periodic points of $f$ booth having $k$-dimensional strong stable and
unstable manifolds.
For any neighborhood $\mathcal{U}$ of $f$ there exists a diffeomorphism
$g\in \mathcal{U}$ with a $k$-strong heteroclinic intersection for the
analytic continuation $p(g)$ and $q(g)$ of $p$ and $q$, respectively, for
$g$.
That is, there exists an intersection between $W_{k}^{ss} (p(g))$ and
$W_{k}^{uu} (q(g))$.
\end{lemma}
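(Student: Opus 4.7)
The plan is to produce the $k$-strong homoclinic intersection in two steps: first construct a regular transversal homoclinic intersection of $p$ by means of the symplectic connecting lemma (Proposition \ref{xw}), and then upgrade it to a $k$-strong intersection via a Franks-type symplectic perturbation based on Lemmas \ref{isotropic2} and \ref{symplectic}.

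For the first step I would fix a non-periodic point $z$ close to $p$ on $W^s_{\mathrm{loc}}(p)$, with $\rho$, $L$, $\delta_0$ the constants of Proposition \ref{xw} applied to $z$ and $\mathcal{U}$. Using the recurrence coming from the symplectic (volume-preserving) nature of $f$---and, if necessary, a preliminary $C^1$-small perturbation bringing $W^u(p)$ into sufficiently general position---I find $x\in W^u(p)$ whose positive $f$-orbit visits $B(z,\delta/\rho)$ after exiting the tube $\Delta=\bigcup_{n=1}^L f^{-n}B(z,\delta)$. Proposition \ref{xw} then yields $f_1\in\mathcal{U}$, coinciding with $f$ outside $\Delta$ (in particular $p(f_1)=p$ and $x\in W^u(p(f_1))$), and a transversal homoclinic point $q\in W^s(p(f_1))\pitchfork W^u(p(f_1))$.

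For the second step I upgrade $q$ to a $k$-strong homoclinic intersection. Generically the backward orbit of $q$ is tangent to $E^u(p(f_1))$ but not to the faster subspace $E^{uu}_k(p(f_1))$, and the analogous picture holds for the forward orbit. At a non-periodic iterate $f_1^j(q)$ far from the orbit of $p$, I apply the Symplectic Franks' Lemma together with Lemmas \ref{isotropic2} and \ref{symplectic}: Lemma \ref{isotropic2} lets me symplectically rotate the tangent vector of the orbit into the $k$-dimensional isotropic subspace obtained by transporting $E^{uu}_k(p(f_1))$ along the orbit, while Lemma \ref{symplectic} guarantees that no change occurs on the complementary symplectic summand $(E^{ss}_k\oplus E^{uu}_k)^{\omega}$. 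A symmetric correction aligns the forward side with $E^{ss}_k$. After these corrections, the dominated splitting $E^{ss}_k\oplus E^c\oplus E^{uu}_k$ together with the Hirsch--Pugh--Shub invariant manifold theorem force the backward orbit of $q$ into $W^{uu}_k(p(g))$ and the forward orbit into $W^{ss}_k(p(g))$ for the resulting $g\in\mathcal{U}$, yielding the required intersection. The $m$-elliptic case is handled identically since the same dominated decomposition is available at $p$ (with $E^c$ containing the elliptic directions).

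The main obstacle will be ensuring that a single Franks' correction at one non-periodic iterate of $q$ suffices to align simultaneously the forward asymptotic direction with $E^{ss}_k$ and the backward one with $E^{uu}_k$, while remaining symplectic and small enough not to undo the first step. The delicate point is the $C^1$-size bookkeeping provided by Lemmas \ref{isotropic2} and \ref{symplectic}, measured against the domination ratios in the splitting: the former provides arbitrarily small perturbations within the isotropic directions $E^{ss}_k$ and $E^{uu}_k$, and the latter keeps the perturbations supported away from the symplectic orthogonal complement, so that the tangent alignment survives both forward and backward iteration.
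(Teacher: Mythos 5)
Your proposal differs from the paper's proof in a way that introduces a real gap, and I do not think the gap can be filled with the tools you describe.

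The paper's argument applies the connecting lemma directly to two points $z^s \in W^{ss}_k(p)$ and $z^u \in W^{uu}_k(p)$: after a preliminary perturbation making $f$ transitive (via Proposition~\ref{abc}), one chooses the perturbation tubes $\Delta^s, \Delta^u$ around $z^s$ and $z^u$ so that they avoid the forward orbit of $z^s$ and the backward orbit of $z^u$, and then two applications of Proposition~\ref{xw} glue the orbit of a nearby recurrent point to $z^s$ and $z^u$. Because the support of these perturbations misses the relevant half-orbits and misses $p$, the points $z^s$ and $z^u$ remain in the strong manifolds of $p$ after the perturbation, and they lie on the same orbit, so the $k$-strong intersection is immediate.

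Your plan instead creates a regular transversal homoclinic point $q \in W^s(p) \pitchfork W^u(p)$ first, and then tries to ``upgrade'' it to a $k$-strong intersection by a Franks-type rotation. This second step is where the argument breaks. Membership of $q$ in $W^{uu}_k(p)$ is a condition of codimension $d-k$ inside $W^u(p)$: it says the backward orbit of $q$ converges to $p$ at the fast (strong unstable) rate, equivalently that $q$ lies on the one specific $k$-dimensional leaf through $p$ of the strong unstable lamination of $W^u(p)$. This is a condition on the actual positions of the orbit, not on tangent directions along it. A Franks perturbation at a single non-periodic iterate $f_1^j(q)$ only changes the derivative (and the map in a small tube) there; it cannot make a point that lies off the $k$-dimensional leaf of $p$ hit that leaf, any more than a generic small perturbation will make an orbit pass through a prescribed $k$-dimensional target. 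Lemmas~\ref{isotropic2} and \ref{symplectic} let you steer tangent subspaces symplectically, but they do not provide the orbit-hitting control you need here --- that is precisely the job of the connecting lemma, which you spent in step one to produce only a full-dimensional intersection. Put differently, the connecting lemma must be aimed at points already inside $W^{ss}_k(p)$ and $W^{uu}_k(p)$, not at $W^s(p)$ and $W^u(p)$. A secondary issue: in step one you invoke ``recurrence coming from the symplectic nature of $f$'' to find $x \in W^u(p)$ whose forward orbit visits $B(z,\delta/\rho)$; Poincaré recurrence is a measure statement and does not by itself give such an $x$ on the specific submanifold $W^u(p)$. The paper handles this by first perturbing into the Arnaud--Bonatti--Crovisier residual class to make $f$ transitive, which supplies the needed orbit explicitly.
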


The proof of this lemma uses the fact that transitive diffeomorphisms are
$C^1$-dense in $\dw$.
This is the content of \cite[Theorem 1]{ArBC}, let us state it for
completeness.

We define the \emph{homoclinic class} of a hyperbolic periodic point,
$H(p,f)$, as the closure of transversal intersections between the stable
and unstable manifolds of all points in the orbit of $p$\;:
$H(p,f)=\overline{W^s(orb(p))\pitchfork W^u(orb(p))}$.
It is well-known that homoclinic class is a transitive set and coincides
with the closure of the hyperbolic periodic points homoclinically related
to $p$ (we say that a hyperbolic periodic point $q$ is
\emph{homoclinically related} to $p$ if $W^s(p)\pitchfork W^u(q)\neq
\emptyset$ and $W^u(p)\pitchfork W^s(q)\neq \emptyset$).

\begin{proposition}[Theorem 1 in \cite{ArBC}]
\label{abc}
There exists a residual subset $\mathcal{R}$ of $\dw$ such that if
$f\in \mathcal{R}$ then there exists a hyperbolic periodic point $p$ of
$f$ such that $M=H(p,f)$.
In particular, $f$ is transitive.
\end{proposition}

\begin{proof}[Proof of Lemma~\ref{conexao}]
By Proposition~\ref{abc}, after a perturbation, we can suppose that $f$ is
transitive.
If $W_k^{ss}(p) \cap W_k^{uu}(q) \neq \emptyset$ we are done.
Otherwise, take $z^s\in W_k^{ss}(p)$, $z^u\in W_k^{uu}(q)$, and let
$\mathcal{U}$, $\rho >1$, $L \in \mathbb{N}$, and $\delta_0 >0$ satisfying
simultaneously Proposition~\ref{xw} for $z=z^u$ and Remark~\ref{r.xw} for
$z=z^s$.

We write $\Delta^s=\cup_{n=1}^L f^{-n} B(z^s, \delta)$ and
$\Delta^u=\cup_{n=1}^L f^n B(z^u, \delta)$.
Since $L$ is finite and $W_k^{ss}(p) \cap W_k^{uu}(q) = \emptyset$, we can
choose $\delta>0$ small enough such that $\Delta^s \cap \Delta^u =
\emptyset$ and
$$
f^n(z^s)\notin \Delta^s\cup \Delta^u \text{ and }
f^{-n}(z^u)\notin \Delta^s\cup \Delta^u, \text{ for every } n\in \N.
$$

Since $f$ is transitive we can find $x\in B(z^s,\delta/\rho)$ such that
$f(x)\notin \Delta^s\cup \Delta^u$ and $f^m(x)\in B(z^u, \delta / \rho)$
for a positive integer $m$.
Now, by choice of $\delta$ and $x$, applying Proposition~\ref{xw}
simultaneously for $z^s$ and $z^u$ (which is possible since the
perturbation is a local perturbation of $f$) we can find a symplectic
diffeomorphism $g$, $C^1$-close to $f$, such that $f(x)= g^{-n+1}(z^u)$
and $g^{-1}(f(x))=z^s$.
Therefore, $z^s$ belongs to the backward orbit of $z^u$, and since
$g=f$ outside $\Delta^s\cup \Delta^u$ we have
$z^s, z^u\in W_k^{ss}(p(g))\cap W_k^{uu}(p(q))$.
The lemma is proved.
\end{proof}

\subsection{Periodic symplectic linear systems}
\label{ss.linear-systems}
We recall the concept of periodic linear systems with transitions in the
symplectic scenario as done in \cite{HT}.
For the general definition and more details see \cite{BDP}.

Let $f$ be a homeomorphism defined on a topological space $\Sigma$.
Let $\mathcal{E}$ be a locally trivial vector bundle over $\Sigma$ such
that for every $x \in \Sigma$, $\mathcal{E}(x)$ is a symplectic vector
space of same dimension and endowed with the same symplectic form
$\omega$.
We define $\mathcal{S}(\Sigma, f, \mathcal{E})$ the set of maps
$A \colon \mathcal{E} \to \mathcal{E}$ such that for every $x \in \Sigma$
the induced map $A(x, \cdot)$ is a linear symplectic isomorphism
$\mathcal{E}(x) \to \mathcal{E} (f(x))$, that is, $\omega(u,v) =
\omega(A(u),A(v))$.
Thus, $A(x, \cdot)$ belongs to $\mathcal{L}_\omega(\mathcal{E}(x),
\mathcal{E}(f(x)))$.
We define a norm $| \cdot |$ on $\mathcal{L}_\omega(\mathcal{E}(x),
\mathcal{E}(f(x)))$ induced by Euclidean metrics of $\mathcal{E}(x)$ and
$\mathcal{E}(f(x))$\;:
$$
|A(x,\cdot)| = \sup \{ |A(x,v)|, v \in \mathcal{E}(x), |v| = 1 \}.
$$
For $A \in \mathcal{S}(\Sigma, f, \mathcal{E})$ we set $|A| =
\sup \{ |A(x, \cdot)| \colon x\in \Sigma \}$.
Then we define the \emph{norm} of $A \in \mathcal{S}(\Sigma, f,
\mathcal{E})$ as $\| A \| = \max \{|A|,|A^{-1}| \}$.

A {\it linear symplectic system (or linear symplectic cocycle over $f$)}
is a 4-tuple $(\Sigma, f, \mathcal{E},A)$, where $\Sigma$ is a topological
space, $f$ is a homeomorphism on $\Sigma$, $\mathcal{E}$ is a symplectic
vector bundle defined over $\Sigma$, and $A \in \mathcal{S}(\Sigma, f,
\mathcal{E})$ with $\|A\| < \infty$.
When all points in $\Sigma$ are periodic points of $f$ we say that
$(\Sigma, f, \mathcal{E},A)$ is \emph{periodic}.

Now, we recall the concept of \emph{linear systems with transitions}.
Given a set $\mathcal{B}$, a {\em word} with letters in $\mathcal{B}$ is a
finite sequence of elements of $\mathcal{B}$.
The product of the word $[a]= (a_1,\dots ,a_n)$ by $[b]= (b_1,\dots ,b_m)$
is the word $(a_1, \dots , a_n, b_1, \dots, b_m)$.
We say a word is \emph{not a power} if $[a] \neq [b]^k$ for every word
$[b]$ and $k > 1$.

With this notation, for a periodic symplectic linear system
$(\Sigma, f, \mathcal{E}, A)$ if we consider the word
$[M_A (x)] = (A(f^{n-1}(x)), \dots , A(x)),$
where $n$ is the period of $x\in \Sigma$, then
the matrix $M_A(x)$ is the product of the letters of the word $[M_A (x)]$,
that is,
$$
M_A (x) = A(f^{n-1}(x)) A(f^{n-2}(x)) \ \dots \   A(x).
$$
A periodic linear system is {\it diagonalizable} at the point $x\in\Sigma$
if  $M_A(x)$ has only real eigenvalues of multiplicity one.

\begin{definition}[Definition 1.6 of \cite{BDP}]
\label{d.transition}
Given $\varepsilon > 0$, a periodic symplectic linear system
$(\Sigma, f, \mathcal{E},A)$ {\em admits $\varepsilon$-transitions} if for
every finite family of points $x_1, \dots , x_n = x_1 \in \Sigma$ there is
an orthonormal system of coordinates of the linear bundle $\mathcal{E}$ so
that $(\Sigma, f, \mathcal{E},A)$ can now be considered as a system of
matrices $(\Sigma, f, A)$, and for any $(i, j) \in \{1,\dots ,n\}^2$ there
exist $k(i, j) \in \mathbb{N}$ and a finite word
$[t^{i,j}]= (t_1^{i,j}, \dots , t_{k(i,j)}^{i,j})$ of symplectic matrices,
satisfying the following properties:
\begin{enumerate}
\item For every $m \in \mathbb{N}$, $\imath = (i_1, \dots, i_m) \in
\{1,\dots,n\}^m$, and $\alpha=(\alpha_1,\dots ,\alpha_m) \in \mathbb{N}^m$
consider the word
\begin{align*}
[W(\imath, \alpha)] & = [t^{i_1,i_m}][M_A(x_{i_m})]^{\alpha_m}
[t^{i_m,i_{m-1}}][M_A(x_{i_{m-1}})]^{\alpha_{m-1}} \dots
\\ & \quad \dots
[t^{i_2,i_1}][M_A(x_{i_1})]^{\alpha_1},
\end{align*}
where the word
$[W(\imath, \alpha)]$ is not a power.
Then there is $x(\imath, \alpha) \in \Sigma$ such that
\begin{itemize}
\item the length of $[W(\imath, \alpha)]$ is the period of
$x(\imath,\alpha)$;
\item the word $[M_A (x(\imath, \alpha))]$ is $\varepsilon$-close to
$[W(\imath, \alpha)]$ and there is an $\varepsilon$-sym\-plec\-tic perturbation
$\tilde A$ of $A$ such that the word $[M_{\tilde{A}} (x(\imath,\alpha))]$
is $[W(\imath, \alpha)]$.
\end{itemize}

\item One can choose $x(\imath, \alpha)$ such that the distance between
the orbit of $x(\imath, \alpha)$ and any point $x_{i_k}$ is bounded by some
function of $\alpha_k$ which tends to zero as $\alpha_k$ goes to infinity.
\end{enumerate}
\end{definition}

Given $\imath ,\alpha$ as above, the word $[t^{i, j}]$ is an
$\varepsilon$-transition from $x_j$ to $x_i$.
We call {\em $\varepsilon$-transition matrices} the matrices $T_{i,j}$
which are the product of the letters composing $[t^{i, j}]$.
We say a periodic linear system {\em admits transitions} if
for any $\varepsilon > 0$ it admits $\varepsilon$-transitions.

\begin{remark}
Let $x_1, \dots , x_n=x_1$ be in $\Sigma$ and let $[t^{i,j}]$ be an
$\eps$-transition from $x_j$ to $x_i$.
Then for every $\alpha, \beta \ge 0$ the word
$$
([M_A(x_i)]^{\alpha} [t^{i,j}] [M_A(x_j)]^{\beta})
$$
is also an $\eps$-transition from $x_j$ to $x_i$.
Further, if $[t^{j,k}]$ is an $\eps$-transition from $x_k$ to $x_j$, then
the word $[t^{i,j}][t^{j,k}]$ is an $\eps$-transition from $x_k$ to $x_i$.
In particular, for any $\eps>0$ and $x \in \Sigma$ we can consider non
trivial $\eps$-transitions from $x$ to itself.
\label{rmktransition}\end{remark}

The following lemma gives an example of linear systems having symplectic
transitions.
It is a symplectic version of [Lemma 1.9 in \cite{BDP}].

\begin{lemma}[Lemma 4.5 in \cite{HT}]
\label{ex}
Let $f$ be a symplectic diffeomorphism and let $p$ be a hyperbolic
periodic point of $f$.
The derivative $Df$ induces a continuous periodic symplectic linear system
with transitions on the set $\Sigma$ formed by hyperbolic periodic points
homoclinically related  to $p$.
\end{lemma}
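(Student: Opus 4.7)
The plan is to verify the three structural requirements for $(\Sigma,f,\mathcal{E},A)$ to be a periodic symplectic linear system admitting transitions: (i) $\Sigma$ is an infinite set of periodic orbits on which $\mathcal{E}=T_{\Sigma}M$ is a symplectic vector bundle and $A=Df|_{\mathcal{E}}$ is a symplectic cocycle; (ii) given any finite family $x_1,\dots,x_n=x_1$ in $\Sigma$ one can choose continuous orthonormal symplectic trivializations of $\mathcal{E}$ in neighborhoods of the orbits; and (iii) one can associate to every ordered pair $(i,j)$ a word of symplectic matrices $[t^{i,j}]$ realizing an $\varepsilon$-transition in the sense of Definition \ref{d.transition}. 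Point (i) is immediate: the homoclinic class $H(p,f)$ of $p$ contains Smale horseshoes, so the set of hyperbolic periodic points homoclinically related to $p$ is dense in $H(p,f)$ and in particular infinite, while $Df$ preserves $\omega$ fiberwise. For (ii) one picks a symplectic basis at each $x_i$, extends it to an $\omega$-orthonormal frame along the orbit, and then uses local Darboux charts near each orbit; in these coordinates the matrices $A(f^k(x_i))$ are symplectic.

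The heart of the argument is the construction of the transitions. Fix $\varepsilon>0$ and the family $x_1,\dots,x_n=x_1$. Since every $x_i$ is homoclinically related to $p$, for every ordered pair $(i,j)$ choose transverse intersections $q_j^u\in W^u(x_j)\pitchfork W^s(p)$ and $q_i^s\in W^u(p)\pitchfork W^s(x_i)$. Using the inclination ($\lambda$-)lemma, for $N=N(\varepsilon)$ large enough the iterates $f^N(q_j^u)$ approach $p$ tangent to $E^u(p)$ and the preimages $f^{-N}(q_i^s)$ approach $p$ tangent to $E^s(p)$. Concatenating the derivatives along the finite pseudo-orbit
\[
x_j\rightsquigarrow q_j^u \rightsquigarrow f^N(q_j^u)\rightsquigarrow p\rightsquigarrow f^{-N}(q_i^s)\rightsquigarrow q_i^s\rightsquigarrow x_i
\]
and expressing this composition in the chosen trivializations yields a word $[t^{i,j}]$ of symplectic matrices whose product $T_{i,j}$ is the transition matrix from $x_j$ to $x_i$.

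To promote these pseudo-transitions to genuine ones, apply the Anosov–Smale shadowing lemma inside the uniformly hyperbolic horseshoe containing the points $x_i,q^u_j,q^s_i$: given any multi-index $(\imath,\alpha)=((i_1,\alpha_1),\dots,(i_m,\alpha_m))$ (with $w(\imath,\alpha)$ not a power so that the shadowing orbit is not itself a lower-period repetition), there is a unique periodic orbit $x(\imath,\alpha)\in \Sigma$ that shadows the concatenation of $\alpha_k$ turns around $x_{i_k}$ followed by the connecting segment from $x_{i_k}$ to $x_{i_{k+1}}$. By hyperbolicity the $Df$-cocycle along this orbit is automatically $\varepsilon$-close in each letter to the formal product $[W(\imath,\alpha)]$ after identification through the trivializations, and the distance between the orbit of $x(\imath,\alpha)$ and each $x_{i_k}$ shrinks to zero as $\alpha_k\to\infty$, giving condition (2) of the definition.

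The step I expect to be most delicate is the last one: making $[M_A(x(\imath,\alpha))]$ equal to $[W(\imath,\alpha)]$ after an $\varepsilon$-symplectic perturbation of $A$. The shadowing orbit $x(\imath,\alpha)$ realizes the transition only up to the small identification errors produced by the $\lambda$-lemma along the connecting segments, and these identification maps are not a priori symplectic with respect to the fixed trivializations. To correct them one proceeds fiber by fiber along the transition segments: on each symplectic block (where the unstable/stable subspaces remain Lagrangian, Remark \ref{dec.}) the correction between the shadowed tangent space and the reference frame is realized by Lemma \ref{isotropic2}, which replaces the nearby isotropic subspace by the reference one via a symplectic map close to the identity and trivial on a complementary Lagrangian; on the already-symplectic complements Lemma \ref{symplectic} extends the remaining identification to a symplectic map of the full fiber fixing the $\omega$-orthogonal complement. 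Composing these corrections with the derivative of $f$ produces an $\varepsilon$-symplectic perturbation $\tilde A$ of $A$ along the orbit of $x(\imath,\alpha)$ realizing exactly the word $[W(\imath,\alpha)]$, which completes the verification of Definition \ref{d.transition} and hence the lemma.
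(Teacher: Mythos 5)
The paper does not prove this lemma itself; it imports it as Lemma~4.5 of \cite{HT}, which is the symplectic adaptation of Lemma~1.9 in \cite{BDP}. There is therefore no in-paper proof to compare against, but your reconstruction follows exactly the expected BDP/HT template: use transverse homoclinic intersections and the inclination lemma to build pseudo-transitions, shadow them inside a hyperbolic horseshoe to obtain the periodic points $x(\imath,\alpha)$, and then correct the identification errors by symplectic linear maps close to the identity so that the $\varepsilon$-perturbation $\tilde A$ realizing $[W(\imath,\alpha)]$ stays in the symplectic group. Your appeal to Lemma~\ref{isotropic2} and Lemma~\ref{symplectic} for the correction step is precisely the point where the symplectic version departs from \cite{BDP}, and is the right mechanism.

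One gap worth flagging: you assert at the outset that ``$H(p,f)$ contains Smale horseshoes, so $\Sigma$ is infinite,'' but this is only true when $p$ admits a transverse homoclinic point, i.e.\ when $H(p,f)$ is nontrivial; for an arbitrary symplectic $f$ and an arbitrary hyperbolic periodic point $p$ this can fail, and then $\Sigma$ reduces to the finite orbit of $p$, violating the ``infinite set of periodic points'' clause in the definition of a periodic linear system. The paper is aware of this and, when it actually invokes Lemma~\ref{ex} (in the proof of Lemma~\ref{l.diagonalizable}), it supposes $\Sigma$ nontrivial ``since $f$ belongs to a residual subset (see \cite{X}).'' So either state the nontriviality of $H(p,f)$ as a hypothesis of the lemma, or confine the statement to the generic setting where \cite{X} guarantees it; as written your step (i) silently assumes what must be supplied. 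The same implicit assumption is used again when you place $x_i$, $q^u_j$, $q^s_i$ inside a single horseshoe for the shadowing argument.
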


A nice property of periodic linear systems $(\Sigma, f, \mathcal{E},A)$
admitting transitions is the existence of arbitrarily small perturbation
of $A$ which is diagonalizable and defined on a dense subset of $\Sigma$,
see \cite[Lemma 4.16]{BDP} (and \cite[Lemma 4.7]{HT} for a symplectic
version).

\section{Proof of Theorem~\ref{t.trichotomy}}
\label{s.proof_t.trichotomy}
We start this section proving that after a small pertubation we obtain a
diffeomorphism in $\ph^1(m)$ having a \emph{nice} non-hyperbolic periodic
point.
In the sequel, we use this result to prove Theorem \ref{t.trichotomy}

\begin{proposition}
\label{p.identity}
Let $f\in \inte(\mathcal{PH}_{\omega}^1(m))$.
For any small neighborhood $\mathcal{U}\subset \ph^1(m)$ of $f$ and
$\eps>0$ there exists a diffeomorphism $g\in \mathcal{U}$ and a periodic
point $p$ of $g$ such that $Dg^{\tau(p,g)}(p)|E_{m}^c=Id$.
Moreover the orbit of $p$ is $\eps$-dense in $M$.
\end{proposition}

The proof of this proposition is a direct consequence of the next result
and Proposition~\ref{abc}.
Let us mention that part of this proposition is given in
\cite[Theorem 3.5]{ABW}.

\begin{proposition}[Proposition 5.3 in \cite{HT}]
\label{HT}
For any $\eps>0$, and $K>0$ there is $l>0$ such that any symplectic
periodic $2d$-dimensional linear system $(\Sigma,f, \mathcal{E}, A)$
bounded by $K$ (i.e. $\|A\|<K$) and having symplectic transitions
satisfies the following,
\begin{itemize}
\item  either $A$ admits an $l$-dominated splitting,
\item or there are a symplectic $\eps$-perturbation $\tilde{A}$ of $A$ and
a point $x\in \Sigma$ such that $M_{\tilde{A}}(x)$ is the identity matrix.
\end{itemize}
\end{proposition}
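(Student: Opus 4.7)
The plan is to combine Proposition \ref{abc} (generic transitivity) with Proposition \ref{HT} (applied to the cocycle restricted to the center bundle), using Franks' Lemma and Lemma \ref{symplectic} to lift the linear conclusion to a diffeomorphism perturbation.

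First, I would reduce to the case that $f$ is transitive with $M = H(p_0,f)$ for some hyperbolic periodic point $p_0$. Since $\ph^1(m)$ is open, after shrinking $\mathcal{U}$ we may apply Proposition \ref{abc} to find a diffeomorphism $f_0 \in \mathcal{U}$ satisfying $M=H(p_0,f_0)$. To simplify notation, write $f$ instead of $f_0$. Let $\Sigma$ be the set of hyperbolic periodic points homoclinically related to $p_0$; by a classical result $\Sigma$ is dense in $H(p_0,f)=M$. By Lemma \ref{ex}, the cocycle $(\Sigma, f, TM, Df)$ is a periodic symplectic linear system admitting transitions. Since $f\in \ph^1(m)$, the subbundle $E^c_m$ is continuous and $Df$-invariant over $M$, hence over $\Sigma$, and the restricted cocycle $\mathcal{P}:=(\Sigma, f, E^c_m, Df|_{E^c_m})$ is a $2m$-dimensional periodic symplectic linear system with transitions, bounded by $K:=\|Df\|_{C^0}$.

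Second, I would verify that $\mathcal{P}$ admits no $l$-dominated splitting, for any $l$. If it did, then by the standard extension property of dominated splittings (and continuity of $Df$ on the compact closure $\overline\Sigma=M$), this splitting would extend to an invariant dominated splitting of $E^c_m$ over all of $M$. Together with $E^s\oplus \cdot \oplus E^u$ this would refine the partially hyperbolic decomposition and strictly reduce the center dimension, contradicting the definition of unbreakable center for $f\in \ph^1(m)$.

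Third, fix the small constant $\delta>0$ given by Franks' Lemma for the neighborhood $\mathcal{U}$. Apply Proposition \ref{HT} to $\mathcal{P}$ with perturbation size $\delta_1\ll \delta$: the first alternative is excluded, so there exist a symplectic $\delta_1$-perturbation $\tilde{A}$ of $Df|_{E^c_m}$ and a periodic point $x\in \Sigma$ with $M_{\tilde A}(x) = Id|_{E^c_m(x)}$. I then lift $\tilde A$ to a perturbation of $Df$ on the full tangent bundle along the orbit of $x$: at each iterate $f^i(x)$, Lemma \ref{symplectic} (applied to $W=E^c_m(f^i(x))$, whose symplectic orthogonal is $E^{ss}(f^i(x))\oplus E^{uu}(f^i(x))$, a symplectic subspace by Remark \ref{decomp.}) produces a symplectic linear map on $T_{f^i(x)}M$ that equals $\tilde A(f^i(x))$ on $E^c_m$ and the identity on $E^{ss}\oplus E^{uu}$; composing with $Df(f^i(x))$ yields a $\delta$-perturbation of $Df$. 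By the symplectic Franks' Lemma, this perturbation is realized by some $g\in \mathcal{U}$ with $g=f$ on the orbit of $x$ and $Dg^{\tau(x,g)}(x)|_{E^c_m}=Id$.

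Fourth, for $\eps$-density of the orbit, I would exploit property (2) of transitions (Definition \ref{d.transition}). Fix a finite family $x_1,\dots,x_n\in \Sigma$ that is $\eps/2$-dense in $M$ (possible since $\Sigma$ is dense in $M$). The periodic point $x$ in the conclusion of Proposition \ref{HT} is constructed, as in \cite{HT}, by concatenating transition words joining finitely many prescribed periodic points with large iterates of their cocycles. By prepending/appending to this word transitions through each of the $x_i$ with iterate parameters $\alpha_i$ chosen sufficiently large, property (2) guarantees that the orbit of the resulting periodic point $x$ passes within $\eps/2$ of each $x_i$, hence is $\eps$-dense in $M$. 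Moreover the identity condition on $E^c_m$ at the return time survives this enlargement, since multiplying $M_{\tilde A}(x)=Id|_{E^c_m}$ by conjugations coming from transitions can be absorbed into the same construction (it is built into the proof of Proposition \ref{HT}). Taking $g$ to be the diffeomorphism produced from this enlarged orbit completes the proof.

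The main obstacle is the bookkeeping in the last step: one must control both the density of the orbit and the identity condition on $E^c_m$ simultaneously, which requires peeking inside the construction used in \cite{HT} rather than applying Proposition \ref{HT} as a black box. The absence of dominated splitting on the center cocycle (Step 2) and the extension from $E^c_m$ to $TM$ via Lemma \ref{symplectic} (Step 3) are the technically cleaner ingredients.
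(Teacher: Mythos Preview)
There is a mismatch between the target statement and your proposal. The statement labeled \ref{HT} is Proposition~5.3 of \cite{HT}; it is quoted in the paper as an external result and is not proved there. Your proposal does not prove Proposition~\ref{HT} either---it \emph{uses} Proposition~\ref{HT} as a black box. What you have written is a proof of Proposition~\ref{identity} (the statement immediately preceding \ref{HT} in the paper): given $f\in\ph^1(m)$, find $g$ close to $f$ with a periodic point $p$ whose orbit is $\eps$-dense and $Dg^{\tau(p,g)}(p)|E^c_m=Id$.

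For Proposition~\ref{identity}, your argument coincides with the paper's. Both reduce via Proposition~\ref{abc} to $M=H(p_0,f)$, restrict the cocycle to the center bundle over the homoclinic class, rule out a dominated splitting of $E^c_m$ by the unbreakable-center hypothesis, apply Proposition~\ref{HT} to obtain a periodic point with $M_{\tilde A}(x)=Id|_{E^c_m}$, extend $\tilde A$ to $TM$ via Lemma~\ref{symplectic} (using $(E^c_m)^{\omega}=E^{ss}\oplus E^{uu}$ from Remark~\ref{decomp.}), and realize the perturbation by Franks' Lemma. The only difference is cosmetic: for $\eps$-density the paper simply invokes Remark~\ref{rmkHT}, whereas you spell out the mechanism (prepend transitions through a finite $\eps/2$-dense family with large $\alpha_i$, using item~(2) of Definition~\ref{d.transition}). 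Your honest caveat that this requires looking inside the construction of \cite{HT} rather than treating Proposition~\ref{HT} as a black box is exactly the content of Remark~\ref{rmkHT}; the paper makes the same admission, just more tersely.
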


\begin{remark}
We remark that the periodic point $x$ in the second item of the
previous proposition can be find with $\eps$-dense orbit in $\Sigma$, for
any $\eps>0$.
\label{rmkHT}
\end{remark}

\begin{proof}[Proof of Proposition \ref{p.identity}]
Let $TM=E^s\oplus E^c\oplus E^u$ be the partially hyperbolic splitting
with unbreakable center given by $f$ over $M$.
Since $f\in \inte(\mathcal{PH}_{\omega}^1(m))$, by Proposition \ref{abc}
we can suppose, after a perturbation, that $M=H(p,f)$.
We denote by $\Sigma$ the set of hyperbolic periodic points homoclinically
related to $p$, which are dense in $M$, and consider thus the periodic
symplectic linear system $(\Sigma,f, E^c, Df|E^c)$ having symplectic
transitions.

Provided that $f\in \mathcal{PH}^1_{\omega}(m)$, the vector bundle $E^c$
admits no dominated splitting for $Df$.
It follows from Proposition~\ref{HT} and Remark~\ref{rmkHT} that there
exists $x\in \Sigma$ with $\eps$-dense orbit in $M$ and a symplectic
perturbation $\tilde{A}$ of $Df|E^c$ along the orbit of $x$, such that
$M_{\tilde{A}}(x) = Id$.

Hence, let $\delta>0$ be a small constant and $\mathcal{U}$ a neighborhood
of $f$ given by Franks Lemma (Lemma~\ref{franks}), we use
Lemma \ref{symplectic} to find symplectic linear maps $A_i$,
$\delta$-close to $Df(f^i(x))$, $0\leq i<\tau(x)$, such that
$A_i|E^c=\tilde{A}(f^i(x))$ and
$A_i|(E^{ss}\oplus E^{uu})=Df|(E^{ss}\oplus E^{uu})$.
Thus, there exists $g\in \mathcal{U}$ such that $x$ still is a periodic
point of $g$ and $Dg(g^{i}(x))=A_i$, for any $0\leq i <\tau(x)$, which
implies
$$
Dg^{\tau(x)}|E^c(x)=M_{\tilde{A}}(x)=Id|E^c(x).
$$
Proving the proposition.
\end{proof}

Using Proposition \ref{p.identity} we are able to prove
Theorem~\ref{t.trichotomy}.

\begin{proof}[Proof of Theorem \ref{t.trichotomy}]
By Dolgopyat and Wilkinson \cite{DW}, for any $1\leq m<d$, there exists
an open and dense subset $\displaystyle \widetilde{\mathcal{PH}}^1_{\omega}(m)
\subset \inte(\mathcal{PH}^1_{\omega}(m))$ such that every
$f \in \displaystyle\widetilde{\mathcal{PH}}^1_{\omega}(m)$ is robustly
transitive partially hyperbolic symplectic diffeomorphisms.

Recall we are denoting the set of Anosov symplectic $C^1$-diffemorphisms
by $\mathcal{A}$.
We write
$$
\widetilde{\mathcal{PH}}^1_{\omega}(d)=\dw\setminus \overline{\mathcal{A}
\cup \bigcup_{1\leq m<d}\widetilde{\mathcal{PH}}^1_{\omega}(m)}.
$$
Note that $\widetilde{\mathcal{PH}}^1_{\omega}(d)$ coincides with
$\mathcal{PH}^1_{\omega}(d)$ (the complement of the closure of partially
hyperbolic and Anosov diffeomorphisms).
In fact, if $f\in \dw$ is partially hyperbolic (or Anosov), then after a
perturbation we can assume that the center bundle $E^c$ of $f$ has no
dominated splitting, say $\dim E^c=2m$, $0 \le m < d$ in a neighborhood
of $f$.
Hence, by continuity of the partially hyperbolic splitting,
$f\in \inte(\mathcal{PH}^1_{\omega}(m))$.

Given $1\leq m\leq d$ and $n\in \N$ we denote by
$\mathcal{B}_{n,m} \subset \widetilde{\mathcal{PH}}^1_{\omega}(m)$ the
subset of diffeomorphisms  $g$ having a $m$-elliptic periodic point, with
$1/n$-dense orbit in $M$.
Since $m$-elliptic periodic points are robust, $\mathcal{B}_{n,m}$ is an
open set.

Let $f\in \widetilde{\mathcal{PH}}^1_{\omega}(m)$ and $n\in \N$.
By Proposition \ref{p.identity}, there exists a diffeomorphism $g \in
\inte \mathcal{PH}^1_{\omega}(m)$, $C^1$-close to $f$, having a periodic
point $p$ with $1/n$-dense orbit in $M$, such that
$Dg^{\tau(p,g)}|E^c(p)=Id$.
Let $\{e_1, \ldots, e_{2m}\}$ be a symplectic basis in $E^c(p)$,
where the subspace spanned by $\{e_i, e_{m+i}\}$, $1 \le i \le m$, is
a symplectic subspace.
We rearrange the vectors and we consider the basis
$B=\{e_1, e_{m+1},\ldots, e_{i}, e_{m+i}, \ldots, e_{m}, e_{2m}\}$
of $E^c(p)$.
Thus for any small positive values $\alpha, \beta>0$ we can define a
symplectic linear map in $T_pM$ induced by the following matrix with
respect to basis $B$:
$$
A=\left[\begin{array}{ccccc}
\tilde{A}& 0 & 0 & \ldots & 0
\\ 0 & \tilde{A}& 0&\ldots &0
\\
\vdots &&&&
\\0&\ldots&&& \tilde{A}
\end{array}\right],
\text{ where }
\tilde{A}=\left[\begin{array}{cc}
\cos \alpha & \sin \alpha \\ - \sin \alpha & \cos \alpha
\end{array}\right].
$$
Note this symplectic linear map restrict to the symplectic plane generated
by $\{e_i,\ e_{m+i}\}$ is a small rotation, whenever $\alpha$ is small
enough, for any $1\leq i\leq m$.
So, we can suppose $A$ arbitrary close to $Id$.
Hence, by Lemma~\ref{symplectic} and Remark~\ref{decomp.}, we can find a
symplectic linear map $B \colon T_pM \to T_pM$ arbitrary close to $Id$,
such that $B|E^c=A$ and $B|(E^{ss}\oplus E^{uu})=Id|(E^{ss}\oplus E^{uu})$.
Taking $C=B \circ Dg(g^{\tau(p)-1})$ which is a symplectic linear map
close to $Dg(g^{\tau(p)-1})$, we can use Franks Lemma to perform a local
perturbation of $g$ and find a diffeomorphism $h$ $C^1-$close to $g$, such
that $p$ still is a periodic point of $h$ and
$Dh^{\tau(p,h)}(p)=B\circ Dg^{\tau(p,g)}(p)$.
Then $Dh^{\tau(p,h)}|E^c(p)=A$, which implies $p$ is an $m$-elliptic
periodic point.
Since this perturbation keeps the orbit of $p$, this $m$-elliptic periodic
point still have ($1/n$)-dense orbit in $M$, which implies
$h\in \mathcal{B}_{n,m}$.
Note, when $m=d$, $p$ is a totally elliptic periodic point.

Therefore the sets $\mathcal{B}_{n,m}$ are open and dense inside
$\widetilde{\mathcal{PH}}^1_{\omega}(m)$, which implies
$$
\mathcal{R}=\mathcal{A}\cup \left(\bigcup_{1\leq m\leq d} \bigcap_{n\in \N}
\mathcal{B}_{n,m}\right)
$$
is a residual subset of $\dw$.

To finish, we remark that diffeomorphisms in $\mathcal{R}$ satisfies one,
and only one, of the three items in  Theorem \ref{t.trichotomy}.
\end{proof}


\section{Bounds for entropy: proof of Theorem \ref{t.entropy}}
\label{s.bounds-entropy}

Using periodic symplectic linear systems with transitions we show that
the supremum in
$$
S_m(f)=\sup\left\{\frac{1}{\tau(p,f)}\log
\sigma(Df^{\tau(p,f)}|E^c(p)) \colon p \text{ periodic hyperbolic
point of } f \right\}
$$
is achieved taking account just diagonalizable
periodic points.

\begin{remark}
\label{r.semicontinuity}
Note that $S_m(\cdot)$, $0 < m \le d$, is a lower semicontinuous map.
Indeed, let $\per_h^n (f)$ be the set of hyperbolic periodic points of
period smaller or equal to $n$.
Provided that hyperbolic periodic points are robust, the function
$S_m^n(\cdot) := \sup\left\{\frac{1}{\tau(p,f)}
\log\sigma(Df^{\tau(p,f)}|E^c(p)),\; p\in \per_h^n(f) \right\}$ are
continuous function, and then
$S_m(\cdot)$ is lower semicontinuous.
\end{remark}

We denote the set of hyperbolic periodic point of $f$ by $\per_h(f)$.

\begin{proposition}
\label{p.diagonalizable}
There exists a residual subset $\mathcal{R}_m \subset \inte \ph^1(m)$,
$0<m\leq d$, such that if $f\in \mathcal{R}_m$ then
$$
S_m(f)=\sup\left\{\frac{1}{\tau(p,f)}\log\sigma(Df^{\tau(p,f)}|E^c(p));\,\;
p\in \per_h(f) \text{ is diagonalizable}\right\}.
$$
\end{proposition}

A key point in the proof of this proposition is the next technical result
that allows perturbations of symplectic linear systems to get
diagonalizable systems with close largest absolute eigenvalues.

\begin{lemma}
\label{exp}
Let $(\Sigma, f, \mathcal{E},A)$ be a periodic symplectic linear system
with transition.
For any $\eps>0$, and $x\in \Sigma$ there exists $y\in \Sigma$ and an
arbitrarily small symplectic perturbation $\tilde{A}$ of $A$ defined
on the orbit of $y$, such that $M_{\tilde{A}}(y)$ is diagonalizable.
Moreover if $\lambda_x$ (resp. $\lambda_y$) denotes the eigenvalue of
$M_A(x)$ (resp. $M_{\tilde{A}}(y)$) with largest absolute value, then
$$
\left|\frac{1}{\tau(x)}\log |\lambda_x|-
\frac{1}{\tau(y)}\log |\lambda_y| \right|<\eps,
$$
where $\tau(x)$ (resp. $\tau(y)$) denotes the period of $x$ (resp. $y$).
\end{lemma}

\begin{proof}
After an arbitrarily small symplectic perturbation of $A$ along a
periodic orbit of $x$, we can assume that $M_A(x)$ has only simple
eigenvalues and that any complex eigenvalue has rational argument.
Hence, supposing that $\mathcal{E}$ is a $2d$-dimensional vector bundle,
we consider the partially hyperbolic splitting
$\R^{2d}=F_1\oplus\ldots \oplus F_n$ given by the eigenspaces associated to
the eigenvalues of $M_A(x)$, which implies $\dim\ F_i = 1, 2$, and as a
consequence of the symplectic structure we have for every distinct
$1\leq i, j\leq n$:
\begin{itemize}
\item $F_i$ is an isotropic subspace,
\item $F_i\oplus F_j$ is a symplectic subspace if $i+j=n+1$, and
\item $(F_i\oplus F_j)^{\omega}=\bigoplus_{k\neq i,j} F_k$, if $i+j=n+1$.
\end{itemize}
The fact that all eigenvalues of $M_A(x)$ has rational argument implies
that there exists a positive integer $k$ such that $(M_A(x))^k$ has only
real eigenvalues.
However, if $\dim\ F_i=2$ then $(M_A(x))^k|F_i$ has a real eigenvalue with
multiplicity two.
We use Lemma \ref{symplectic} to find a symplectic linear map $H_i$
arbitrary close to identity, such that $H_i|F_j=Id$, if $j\neq i, n+1-i$,
and $H_i (M_A(x))^k|F_i\oplus F_{n+1-i}$ have four distinct real
eigenvalues.
Moreover, for any $\eps>0$ we can choose such $H_i$ such that defining
$M_{1,i}=H_i (M_A(x))^k$, if $\xi$ is an eigenvalue of
$M_{1,i}|F_i\oplus F_{n+1-i}$ then there is an  eigenvalue $\lambda$ of
$M_A(x)|F_i\oplus F_{n+1-i}$ such that:
\begin{equation}
\left|\log |\lambda| -  \frac{1}{k}\log |\xi|\right|<\frac{\eps}{2}.
\label{eigen}
\end{equation}
Hence, given $\eps>0$, we can use the existence of the above linear maps
$H_i$ defined on $F_i\oplus F_{n+1-i}$, when $\dim E_i=2$, to find a
symplectic linear map $H$ arbitrarily close to $Id$ such that
$M_1=H(M_A(x))^k$ has only real eigenvalues with multiplicity one, and
\eqref{eigen} holds for every $1\leq i\leq n$.
We denote by $E_i$ the one-dimensional $M_1$-invariant eigenspaces,
$1\le i \le 2d$.

Since the linear system has transitions, there exists a non trivial word
$[t]=(t_1,\ldots, t_{r})$ of symplectic matrices which is a
$(\eps/2)$-transition from $x$ to itself, see Remark \ref{rmktransition}.
We denote by $T$ the symplectic matrix obtained by the product of the
matrices in the word $[t]$.
After an arbitrarily small symplectic perturbation of the matrix $t_{1}$,
if necessary, we can suppose that
$$
T(E_{2d})\cap (E_1\oplus\ldots\oplus E_{2d-1}) = \{0\}\text{ and }
T^{-1}(E_1)\cap (E_2\oplus\ldots\oplus E_{2d})=\{0\}.
$$
Thus by the choice of the partially hyperbolic splitting on
$\R^{2d}$, $(M_1)^jT(E_{2d})$ converges to $E_{2d}$ when $j$ goes to
infinity.
Hence, taking $j_{2d}$ large enough, by Lemma~\ref{isotropic2} we can find
a symplectic linear map $L_{2d}$ close to $Id$, such that
$L_{2d}(M_1)^{j_{2d}}T(E_{2d})=E_{2d}$ and
$L_{2d}|(E_1\oplus \ldots \oplus E_{2d-1})=Id$.

Analogously, provided that $(M_1)^{-j}T^{-1}(E_1)$ converges to $E_1$ when
$j$ goes to infinity, we can choose $j_1>0$ to find a symplectic linear
map $L_1$ arbitrarily close to $Id$, such that $L_1(E_1)=(M_1)^{-j_1}T^{-1}(E_1)$ and
$L_1|(E_2\oplus\ldots\oplus E_{2d})=Id$.
Therefore, defining
$$
\tilde{M}_1=L_{2d}(M_1)^{j_{2d}}T(M_1)^{j_1}L_1$$
we have that $\tilde{M}_1(E_1)=E_1$ and $\tilde{M}_1(E_{2d})=E_{2d}$.
Once again, as a consequence of the symplectic structure, $\tilde{M}_1$
also satisfies
$$
\tilde{M}_1(E_2\oplus\ldots\oplus E_{2d-1})=E_2\oplus\ldots\oplus E_{2d-1}.
$$
In fact, if this is not true, then there exist $v\in E_2 \oplus \ldots
\oplus E_{2d-1}$ and $u\in E_1\oplus E_{2d}$ such that
$\omega(\tilde{M}_1(v), u)\neq 0$.
On the other hand, by construction,
$\tilde{M}_1^{-1}(u)\in E_1\oplus E_{2d}$ and then
$\omega(v, \tilde{M}_1^{-1}(u))=0$, which gives a contradiction since
$\tilde{M}_1$ is symplectic.

Proceeding as before, we can find $j_2, j_{2d-1}$ positive integers
sufficiently large, symplectic linear maps $L_2$ and $L_{2d-1}$
arbitrarily close to $Id$, such that
$L_2(E_2)=({M}_1)^{-j_2}(\tilde{M}_1)^{-1}(E_2)$,
$L_2|(E_1 \oplus E_3\oplus\ldots\oplus E_{2d})=Id$,
$L_{2d-1}(M_1)^{j_{2d-1}}\tilde{M}_1(E_{2d-1})=E_{2d-1}$, and
$L_{2d-1}|(E_1\oplus \ldots \oplus E_{2d-2} \oplus E_{2d})=Id$.
So, defining
$$
\tilde{M}_2=L_{2d-1}(M_1)^{j_{2d-1}}\tilde{M}_1(M_1)^{j_2}L_2
$$
we have that $\tilde{M}_2 (E_i) = E_i$, for $i=1,2,2d-1, 2d$, and
$$
\tilde{M}_2(E_3\oplus\ldots\oplus E_{2d-2})=E_3\oplus\ldots\oplus E_{2d-2}.
$$
Repeating the above process finitely many times we also can find
symplectic linear maps $L_i$ close to identity, for any
$3\leq i\leq 2d-2$, such that the maps
$$
\tilde{M}_k=L_{2d-k+1}(M_1)^{j_{2d-k+1}}\tilde{M}_{k-1}(M_1)^{j_k}L_k,
$$
are well defined for any $3\leq k\leq d$, satisfying
$\tilde{M}_k (E_i) = E_i$, for $i=1,\ldots, k, 2d-k+1,\ldots, 2d$ and
$\tilde{M}_k(E_{k+1}\oplus\ldots\oplus E_{2d-k})=E_{k+1}\oplus
\ldots\oplus E_{2d-k}$,
where the last equality holds only when $k< d$.
In particular, $\tilde{M}= \tilde{M_d}$ preserves $E_i$ for every
$1\leq i \leq 2d$, i.e., $\tilde{M}(E_i)=E_i$.

Now, since $E_i$ is an one-dimensional subspace, $1\leq i\leq 2d$, we can
choose $l>0$ large enough and define $M=(M_1)^l \tilde{M}$, such that if
$\mu_i$ and $\xi_i$ denote the eigenvalues of $M|E_i$ and $M_1|E_i$,
respectively; and $\tau=\tau(x)k(\jmath+l)+r$, where
$\jmath = j_1, \ldots, j_{2d}$ (recall $\tau(x)$ is the period of
$x\in \Sigma$ and $r$ is the length of $[t]$) then we have
\begin{equation}
\left|\frac{1}{\tau}\log |\mu_i |-\frac{1}{k\tau(x)}\log |\xi_i|\right|<
\frac{\eps}{2}.
\label{eigen3}
\end{equation}
Hence, if we denote by $\mu_M$ (resp. $\lambda_x$) the eigenvalue of
largest absolute value of $M$ (resp. $M_A(x)$),  then equations
\eqref{eigen} and \eqref{eigen3} give
\begin{equation}
\left|\frac{1}{\tau}\log |\mu_M |-\frac{1}{\tau(x)}\log |\lambda_x|\right|
<\eps.
\label{eigen2}
\end{equation}

Thus, since $[t]$ is a non trivial $(\eps/2)$-transition from $x$ to
itself, there exists $y\in \Sigma$ such that $[M_A(y)]$ is
$(\eps/2)$-close to $[\tilde{M}]=[M_A(x)]^{k(l+j_{d+1}+\ldots + j_{2d})}
[t][M_A(x)]^{j_1+\ldots + l_d}$, and moreover $\tau(y)$ is equal to the
length of $[\tilde{M}]$ which is $\tau$.

Now, since $H$ and $L_i$, $1\leq i\leq 2d$, are symplectic linear maps
close to $Id$, then the matrix $M_A(y)$ is close to $M$, which implies
that there exists an arbitrarily small symplectic perturbation $\tilde{A}$ of $A$
defined on the orbit of $y$, such that $M_{\tilde{A}}(y)=M$.
Therefore, $M_{\tilde{A}}(y)$ is diagonalizable and if $\lambda_y$ denotes
the eigenvalue with largest absolute value of $M_{\tilde{A}}(y)$ we have,
by \eqref{eigen2}, that
$$
\left|\frac{1}{\tau(y)}\log |\lambda_y | -
\frac{1}{\tau(x)}\log |\lambda_x|\right|<\eps.
$$
This complete the proof.
\end{proof}

\begin{remark}
In the previous lemma if we have the additional hypothesis that the linear
bundle $\mathcal{E}$ has a dominated splitting for $f$,
$\mathcal{E}=\mathcal{E}_1\oplus \ldots \oplus\mathcal{E}_n$, then the
diagonalizable periodic point $y$ could be found such that
$M_{\tilde{A}}(y)$ keeps invariant the subbundles $\mathcal{E}_i$ and
moreover
$$
\left|\frac{1}{\tau(x)}\log |\lambda_{x,i}|-
\frac{1}{\tau(y)}\log |\lambda_{y,i}|\right|<\eps,
$$
where $\lambda_{x,i}$ and $\lambda_{y,i}$ denote the eigenvalues with
largest absolute value of $M_A(x)|\mathcal{E}_i$ and
$M_{\tilde{A}}(y)|\mathcal{E}_i$, respectively, for every $1\leq i\leq n$.
\label{reexp}
\end{remark}

\begin{proof}[Proof of Proposition~\ref{p.diagonalizable}]
For each $0 < m \le d$ we define
$$
\tilde{S}_m(f) = \sup\left\{\frac{1}{\tau(p,f)}
\log\sigma(Df^{\tau(p,f)}|E^c(p)),\; p\in \per_h(f) \text{ is
diagonalizable}\right\}.
$$
Similar to ${S}_m(f)$ the maps $\tilde{S}_m(f)$ are lower semicontinuous
for each $0 < m \le d$, see Remark~\ref{r.semicontinuity}.
Hence there exists a residual subset $\mathcal{R}_m \subset\inte \ph^1(m)$
where  $\tilde{S}_m(f)$ is continuous.
Taking $f\in \mathcal{R}_m$, for any $\eps>0$ there exists a small
neighborhood $\mathcal{U} \subset\inte \ph^1(m)$ of $f$ such that
\begin{equation}
\tilde{S}_m(g)<\tilde{S}_m(f)+\frac{\eps}{3}, \text{ for every }
g\in \mathcal{U}.
\label{e3}\end{equation}

By definition of $S_m(f)$, there exists a hyperbolic periodic point $p$ of
$f$ such that
\begin{equation}
S_m(f)<
\frac{1}{\tau(p,f)}\log\sigma(Df^{\tau(p,f)}|E^c(p))+\frac{\eps}{3}.
\label{e1}\end{equation}

It follows from Lemma~\ref{ex} that $Df$ induces a periodic symplectic
linear system with transition $(\Sigma, f, TM, Df)$, where $\Sigma$ is the
set of hyperbolic periodic points of $f$ homoclinicaly related to $p$.
We can suppose $\Sigma$ non-trivial since $f$ belongs to a residual
subset, see \cite{X}.

Let $\delta>0$ be a small constant given by Franks Lemma
(Lemma~\ref{franks}) for $f$ and the neighborhood $\mathcal{U}$.
Provided that $TM=E^s\oplus E^c\oplus E^u$, by Lemma~\ref{exp} and
Remark~\ref{reexp} there exist $\tilde{p}\in \Sigma$ and a symplectic
$\delta$-perturbation $\tilde{A}$ of $Df$ along the orbit of $\tilde{p}$
such that $\tilde{p}$ is diagonalizable and moreover
$$
\frac{1}{\tau(p,f)}\sigma(Df^{\tau(p,f)}|E^c(p))<
\frac{1}{\tau(\tilde{p})}\sigma(M_{\tilde{A}}(\tilde{p})|E^c) +
\frac{\eps}{3}.
$$
Hence, using Franks Lemma, we can find a symplectic diffeomorphism
$g\in \mathcal{U}$ such that $\tilde{p}$ is a diagonalizable hyperbolic
periodic point of $g$ satisfying
\begin{equation}
\frac{1}{\tau(p,f)}\sigma(Df^{\tau(p,f)}|E^c(p))<
\frac{1}{\tau(\tilde{p},g)}\sigma(Dg^{\tau(\tilde{p},g)}|E^c(\tilde{p}))+
\frac{\eps}{3}.
\label{e2}\end{equation}
Using respectively (\ref{e1}), (\ref{e2}), definition of $\tilde{S}_m(g)$,
and (\ref{e3}) we obtain
\begin{align*}
S_m(f)&<
\frac{1}{\tau(p,f)}\log\sigma(Df^{\tau(p,f)}|E^c(p))+\frac{\eps}{3} \\
& <\frac{1}{\tau(\tilde{p},g)}\log\sigma(Dg^{\tau(\tilde{p},g)}|
E^c(\tilde{p}))+\frac{2\eps}{3} \\
& \leq \tilde{S}_m(g)+\frac{2\eps}{3} < \tilde{S}_m(f)+\eps.
\end{align*}

Therefore, since $\eps>0$ is arbitrary, we have $S_m(f)\leq
\tilde{S}_m(f)$, for every $f \in \mathcal{R}_m$.
Which finishes the proof since $\tilde{S}_m(f)\leq S_m(f)$ by definition.
\end{proof}

The next proposition is the main technical result in this paper.
\begin{proposition}
\label{p.estimate}
Let $0<m\leq d$ and $f\in \inte(\ph^1(m))$.
If $p$ is a diagonali\-zable hyperbolic periodic point of $f$, then for
any neighborhood $\mathcal{U}$ of $f$ and any large positive integer
$n$, there exists a diffeomorphism $g\in \mathcal{U}$, such that $p$
still is a diagonalizable hyperbolic periodic point of $g$.
Moreover there exists a hyperbolic basic set $\Lambda(p,g)\subset
H(p,g)$ such that
$$
h_{top}(g|\Lambda(p,g))>\frac{1}{\tau(p,g)}
\log\sigma(Df^{\tau(p,g)}|E^c)-\frac{1}{n}.
$$
\end{proposition}

We postpone the proof of this proposition to Section~\ref{section5}.
Now, let us prove Theorem~\ref{t.entropy}.

\medskip

\begin{proof}[Proof of Theorem~\ref{t.entropy}]
For any positive integer $n>0$, and every $0<m\leq d$ we define
$\mathcal{B}_{m,n}\subset \inte(\ph^1(m))$ the subset of diffeomorphisms
$g$ such that
$$
h_{top}(g)>S_m(g)-\frac{1}{n}.
$$

Since $S_m(\cdot)$ is a lower semicontinuous map defined in $\ph^1(m)$,
there is a residual subset $\mathcal{R}_m^*\subset \inte(\ph^1(m))$
where $S_m(\cdot)$ is continuous.
We can choose $\mathcal{R}_m^*$ as a subset of $\mathcal{R}_m$
in Proposition~\ref{p.diagonalizable}.

Let us fix  some $0<m\leq d$.
Given $f\in \mathcal{R}_m^*$ and $n > 0$ we consider a small neighborhood
$\mathcal{U} \subset \inte(\ph^1(m))$ of $f$ such that
\begin{equation}
S_m(f)>S_m(\xi)-\frac{1}{4n}, \text{ for every } \xi\in \mathcal{U}.
\label{t.eq1}\end{equation}
By Proposition~\ref{p.diagonalizable}, there exists a
diagonalizable hyperbolic periodic point $p$ of $f$ such that
\begin{equation}
\frac{1}{\tau(p,f)}\log\sigma(Df^{\tau(p,f)}|E^c)>S_m(f)-\frac{1}{4n}.
\label{t.eq2}\end{equation}
From Proposition \ref{p.estimate} there exists $g\in \mathcal{U}$ and a
hyperbolic basic set $\Lambda(p,g)\subset H(p,g)$ such that
\begin{equation}
h_{top}(g|\Lambda(p,g))>\frac{1}{\tau(p,g)}
\log\sigma(Dg^{\tau(p,g)}|E^c)-\frac{1}{4n}.
\label{t.eq3}
\end{equation}
Therefore, if $\tilde{g}\in \mathcal{U}$ is a diffeomorphism $C^1$-close
to $g$ then there is a continuation of the hyperbolic basic set
$\Lambda(p,g)$ which we denote by $\Lambda(p(\tilde{g}),\tilde{g})$, where
$p(\tilde{g})$ is a continuation of $p$.
Using properties of entropy, (\ref{t.eq3}), continuity of the Lyapounov
exponents, (\ref{t.eq2}), and (\ref{t.eq1}), respectively, we obtain
\begin{align*}
h_{top}(\tilde{g})&\geq h_{top}(\tilde{g}|\Lambda(p(\tilde{g}),\tilde{g}))
= h_{top}(g|\Lambda(p,g))
\geq \frac{1}{\tau(p,g)} \log\sigma(Dg^{\tau(p,g)}|E^c)-\frac{1}{4n}
\\
&\geq \frac{1}{\tau(p,f)} \log\sigma(Df^{\tau(p,f)}|E^c)-\frac{1}{2n}
\geq S_m(f)-\frac{3}{4n} \geq S_m(\tilde{g})-\frac{1}{n}.
\end{align*}
Hence, every $\tilde{g}$ sufficiently $C^1$-close to $g$ belongs to
$\mathcal{B}_{m,n}$.
So, $\mathcal{B}_{m,n}$ contains an open and dense subset of
$\inte(\ph^1(m))$ in view of $\mathcal{R}_m^*$ is dense in
$\inte(\ph^1(m))$.
We denote this subset by $\mathcal{B}_{m,n}^*$

Now, Theorem~\ref{t.trichotomy} implies that
$\inte(\ph^1(1))\cup\dots \cup \inte(\ph^1(d))\cup \mathcal{A}$ is an open
and dense subset of $\dw$.
Hence,
$$
\mathcal{B}_n=\bigcup_{m=1}^d \mathcal{B}_{m,n}^* \cup \mathcal{A},
$$
is an open and dense subset of $\dw$.
Therefore,
$$
\mathcal{R}=\bigcap_{n\in \N} \mathcal{B}_n
$$
is a residual subset in $\dw$, and satisfies the properties required.
In fact, if $f\in \mathcal{R}$ and is non-Anosov, then there exists
$0<m\leq d$ such that $f\in \mathcal{B}_{m,n}$ for every $n>0$.
Hence,
$$
h_{top}(f)\geq S_m(f).
$$
The proof is finished.
\end{proof}

\section{Nice properties of strong invariant manifolds}
\label{s.nice_invariant_manifolds}
In this section we obtain properties of strong invariant manifolds
essential to prove Proposition \ref{p.estimate}.
Roughly, we get that for any two hyperbolic periodic points $p$ and
$\tilde{p}$ having strong stable and strong unstable directions well
defined with same dimension, we can perform a symplectic perturbation in
order to obtain a symplectic diffeomorphism such that the continuation of
the hyperbolic periodic point $\tilde{p}$ has a strong quasi-transversal
homoclinic intersection with {\em angle} as close as we want to the
{\em angle} between the strong directions of $p$.

Let us make precise the notion of {\it angle between vector subspaces}.
Given a Riemannian manifold $M$, and vectors $v,w\in T_qM$, we define the
angle between $v$ and $w$ by
$$
\ang(v,w)=\left|\tan\left[\arccos\left(\frac{<v,w>}{\|v\|\|w\|}\right)
\right]\right|.
$$
If $E$ is a vector subspace of $T_qM$, the angle between a vector
$u\in T_qM$  and $E$ is defined by
$$
\ang(v,E)=\min_{w\in E,\, |w|=1}\, \ang(v,w).
$$
Finally if $E,F\subset T_qM$ are subspaces we define
$$
\ang(E,F) =\min_{w\in E,\, |w|=1} \, \ang(w,F).
$$

First of all, we show, after a perturbation, the existence of a hyperbolic
periodic point having strong stable and unstable directions with arbitrary
small angle.

\begin{lemma}
\label{lemma1}
Let $1\leq m\leq d$ and $f\in \inte(\ph^1(m))$.
For any $\eps>0$ and any neighborhood $\mathcal{U}$ of $f$, there exists a
diffeomorphism $g\in \mathcal{U}$ with a hyperbolic periodic point $p$
having $d-m+1$-strong stable and unstable manifolds $W^{ss}_{d-m+1}(p)$
and $W^{uu}_{d-m+1}(p)$ such that $\ang(E_{d-m+1}^{ss}(p),
E_{d-m+1}^{uu}(p))<\eps$.
\end{lemma}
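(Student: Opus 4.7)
The plan is to apply Proposition~\ref{l.identity} to get $g_{0}\in\mathcal{U}$ and a periodic point $p$ with $Dg_{0}^{\tau(p,g_{0})}(p)|E^{c}(p)=\mathrm{Id}$, and then perform one further small symplectic perturbation at $p$ that turns the center derivative into a diagonalizable hyperbolic symplectic map $A\colon E^{c}(p)\to E^{c}(p)$ whose most contracting and most expanding eigenlines lie in a common symplectic $2$-plane and make an angle smaller than $\eps/2$ there.

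The construction of $A$ is purely linear-algebraic. First I fix an $\omega$-orthogonal decomposition $E^{c}(p)=P_{1}\oplus\cdots\oplus P_{m}$ into symplectic $2$-planes; on each $P_{i}$ I put a symplectic $2\times 2$ map with two real eigenvalues $\lambda_{i},\lambda_{i}^{-1}$ close to $1$, arranged so that the magnitudes $|\lambda_{1}|<\cdots<|\lambda_{m}|<1$ are distinct across planes. This makes $A$ diagonalizable with simple spectrum and places $E_{\lambda_{1}}\subset P_{1}$ in the $(d-m+1)$-th slot of the modulus ordering, with its symplectic partner $E_{\lambda_{1}^{-1}}\subset P_{1}$ in the $(d+m)$-th slot. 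The key observation is two-dimensional: inside a symplectic plane, any pair of distinct lines is realized as the eigenline pair of some diagonalizable symplectic map. Indeed, writing a basis $(v^{s},v^{u})$ of $P_{1}$ as the columns of $P=\bigl(\begin{smallmatrix}1&1\\ \alpha&\beta\end{smallmatrix}\bigr)$ in a symplectic frame, one checks that $P^{T}JP=(\beta-\alpha)J$, so $P\,\mathrm{diag}(\lambda_{1},\lambda_{1}^{-1})P^{-1}$ is symplectic for every $\alpha\neq\beta$. Choosing $\alpha$ and $\beta$ close makes the two eigenlines in $P_{1}$ make an angle less than $\eps/2$, and taking the $\lambda_{i}$ close to $1$ makes $A$ as $C^{1}$-close to $\mathrm{Id}|E^{c}(p)$ as we wish.

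Since $E^{c}(p)$ is symplectic with $(E^{c}(p))^{\omega}=E^{s}(p)\oplus E^{u}(p)$ (Remark~\ref{decomp.}), Lemma~\ref{symplectic} extends $A$ to a symplectic linear map $B$ on $T_{p}M$, close to the identity, with $B|E^{c}(p)=A$ and $B|E^{s}(p)\oplus E^{u}(p)=\mathrm{Id}$. The Symplectic Franks' Lemma then produces $g\in\mathcal{U}$, coinciding with $g_{0}$ off a small neighborhood of one orbit point of $p$, such that $p$ is still periodic and $Dg^{\tau(p,g)}(p)=B\circ Dg_{0}^{\tau(p,g_{0})}(p)$. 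Because the spectrum of $A$ is all close to $1$ while the contraction and expansion rates of $Dg_{0}^{\tau(p,g_{0})}(p)$ on $E^{s}(p)$ and $E^{u}(p)$ stay uniformly bounded away from $1$ throughout $\mathcal{U}$ (by robustness of the partial hyperbolicity of $f$), the full map $Dg^{\tau(p,g)}(p)$ is diagonalizable and hyperbolic with simple real spectrum, the modulus ordering being $E^{s}(p)$, then $E_{\lambda_{1}}$, then the remaining central eigenspaces, then $E_{\lambda_{1}^{-1}}$, then $E^{u}(p)$. In particular $p$ admits $k$-dimensional strong stable and unstable manifolds for every $0<k<d$.

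To conclude, take a unit $v\in E_{\lambda_{1}}\subset E^{ss}_{d-m+1}(p,g)$. By construction in $P_{1}$, $\ang(v,E_{\lambda_{1}^{-1}})<\eps/2$, and $E_{\lambda_{1}^{-1}}\subset E^{uu}_{d-m+1}(p,g)$, so
$$\ang(E^{ss}_{d-m+1}(p,g),E^{uu}_{d-m+1}(p,g))\leq\ang(v,E^{uu}_{d-m+1}(p,g))\leq\ang(v,E_{\lambda_{1}^{-1}})<\eps.$$
The only real hurdle is making the three successive small perturbations (Proposition~\ref{l.identity}, the linear modification of the center derivative, and the Symplectic Franks' Lemma) combine to keep $g$ inside $\mathcal{U}$; this is handled routinely because the smallness of the angle in $P_{1}$ and the $C^{1}$-size of $A$ can be controlled independently — the former through the choice of the eigenlines in $P_{1}$, the latter through the choice of the $\lambda_{i}$ close to $1$.
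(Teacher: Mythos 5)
Your proof is correct and follows essentially the same route as the paper: apply Proposition~\ref{identity} to trivialize the center derivative, replace it by a diagonalizable hyperbolic symplectic map on $E^c(p)$ whose extremal eigenlines span a symplectic $2$-plane and make a small angle, extend by Lemma~\ref{symplectic} keeping $(E^c)^\omega$ fixed, and invoke the Symplectic Franks' Lemma. The paper realizes the small-angle eigenlines by choosing a non-orthogonal symplectic basis $\{e_i\}$ of $E^c(p)$ with $\ang(e_1,e_{m+1})<\eps$ and making $A$ diagonal in that basis; your conformal-conjugation construction $P\,\mathrm{diag}(\lambda_1,\lambda_1^{-1})P^{-1}$ inside a symplectic $2$-plane is an equivalent way of producing the same matrix.
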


\begin{proof}
The proof is similar in spirit to the proof of Theorem \ref{t.trichotomy}.
Fixed $1\leq m\leq d$, let $f\in \inte(\mathcal{PH}^1_{\omega}(m))$.
Using Proposition~\ref{p.identity}, we can find a diffeomorphism $g$,
$C^1$-close to $f$, having a periodic point $p$ such that
$Dg^{\tau(p,g)}|E^c(p)=Id$.
Now, for any $\eps>0$, we can choose a symplectic basis
$\{e_1, \ldots, e_{2m}\}$ of $E^c(p)$, such that
$\ang(e_1, e_{m+1}) < \eps$.
Hence, taking small constants $\eps_1\ge \eps_2\ge \ldots \ge \eps_m>0$,
we define a symplectic linear map over $E^c(p)$ close to identity, induced
by the symplectic matrix $A=(a_{ij})$ of order $2m \times 2m$, where
$a_{ii}=1-\eps_i$, if $1\leq i\leq m$, $a_{ii}=(1-\eps_i)^{-1}$,
if $m+1\leq i\leq 2m$, and $a_{ij}=0$ if $i\neq j$.

Hence, as done in the proof of Theorem \ref{t.trichotomy}, we can find a
diffeomorphism $h$ $C^1$-close to $g$, such that $p$ still is a periodic
point of $h$, $Dh^{\tau(p)}|E^c(p)=A$ and
$Dh^{\tau(p)}|(E^c(p))^{\omega}=Dg$.
Therefore, $p$ is a hyperbolic periodic point of $h$, and moreover by
choice of $A$ and the symplectic basis of $E^c(p)$, the $n-m+1$-strong
stable and unstable manifolds of $p$ are well defined, with
$e_1\in E_{n-m+1}^{ss}(p)$ and $e_{m+1}\in E_{n-m+1}^{uu}(p)$.
Thus, $\ang(E_{n-m+1}^{ss}(p), E_{n-m+1}^{uu}(p))<\eps$ and the lemma is
proved.
\end{proof}

Next, we state and prove the main result in this section.

\begin{lemma}
\label{lemma2}
Let $f$ be a symplectic diffeomorphism on a $2d$-dimensional symplectic
manifold $M$, with two hyperbolic periodic points $p$ and $\tilde{p}$,
both having $k$-strong stable and unstable manifolds, for some
$1 \leq k\leq d$.
Given $\eps>0$, for any neighborhood $\mathcal{U}$ of $f$ and any
neighborhood $V$ of $\tilde{p}$, there exists a diffeomorphism
$g\in \mathcal{U}$, such that the analytic continuation $p(g)$ of $p$ for
$g$, has a $k$-strong quasi-transversal homoclinic intersection $q$ in
$V$, $q\in W^{ss}_{k}(p(g))\cap W^{uu}_{k}(p(g))$, and moreover
$T_qW^{ss}_{k}(p(g))$ and $T_qW^{uu}_{k}(p(g))$ are $\eps$-close to
$T_{\tilde{p}(g)}W^{ss}_k(\tilde{p}(g))$ and
$T_{\tilde{p}(g)}W^{uu}_k(\tilde{p}(g))$, respectively.
\end{lemma}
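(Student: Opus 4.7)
The plan is to combine the symplectic connecting lemma (Proposition \ref{xw}) with a Grassmannian alignment argument near $\tilde p$. The underlying fact is: backward iteration of a generic $k$-plane by $Dg$ near the hyperbolic periodic point $\tilde p$ attracts it, in the Grassmannian, to the eigenspace $E^{ss}_k(\tilde p)$ of the $k$ most contracted directions of $Dg(\tilde p)$, and symmetrically forward iteration aligns a generic $k$-plane with $E^{uu}_k(\tilde p)$. So the scheme is to create strong invariant manifolds of $p$ that touch $\tilde p$ heteroclinically, iterate close to $\tilde p$ to drag the relevant tangent spaces onto the target eigenspaces, and finally splice the two constructions into a single $k$-strong homoclinic point.

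First, by Proposition \ref{abc} we may perturb inside $\mathcal{U}$ so that $f$ is transitive with $M=H(p,f)$. Two applications of Proposition \ref{xw}, in the same spirit as the proof of Lemma \ref{conexao}, produce $g_0\in\mathcal{U}$ with heteroclinic points
\[
r\in W^{ss}_k(p(g_0),g_0)\cap W^{u}(\tilde p(g_0),g_0),\qquad r'\in W^{uu}_k(p(g_0),g_0)\cap W^{s}(\tilde p(g_0),g_0),
\]
and a further small Franks perturbation puts $T_rW^{ss}_k$ and $T_{r'}W^{uu}_k$ in general position with respect to the $Dg_0$-invariant eigenflag at $\tilde p(g_0)$.

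Second, set $q_n:=g_0^{-n}(r)$ and $q'_m:=g_0^{m}(r')$; by invariance $q_n\in W^{ss}_k(p(g_0),g_0)$ and $q'_m\in W^{uu}_k(p(g_0),g_0)$, and by the heteroclinic relations both converge to $\tilde p(g_0)$ as the indices grow. The identity $T_{q_n}W^{ss}_k=Dg_0^{-n}(r)\,T_rW^{ss}_k$ together with the Grassmannian alignment under $Dg_0^{-1}$ near $\tilde p$ yields $T_{q_n}W^{ss}_k\to E^{ss}_k(\tilde p(g_0))$, and symmetrically $T_{q'_m}W^{uu}_k\to E^{uu}_k(\tilde p(g_0))$. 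Fix $n,m$ large enough that $q_n,q'_m\in V$ and both tangent spaces are within $\eps/2$ of the target eigenspaces. Using transitivity of $g_0$ together with two further applications of Proposition \ref{xw}, we produce $g\in\mathcal{U}$, coinciding with $g_0$ outside a small tube disjoint from the forward $g_0$-orbit of $q_n$, such that $q_n$ lies on the forward $g$-orbit of $q'_m$. Set $q:=q_n$: the forward $g$-orbit of $q$ agrees with its forward $g_0$-orbit and converges to $p(g)$, so $q\in W^{ss}_k(p(g),g)$ with tangent unchanged; while the backward $g$-orbit of $q$ reaches $q'_m$ in $L$ steps and then follows the unperturbed backward $g_0$-orbit of $q'_m$ down to $p(g)$, so $q\in W^{uu}_k(p(g),g)$ with $T_qW^{uu}_k(p(g),g)=Dg^L(q'_m)T_{q'_m}W^{uu}_k$. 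Because the whole transit stays near $\tilde p$ and $E^{uu}_k(\tilde p)$ is $Dg(\tilde p)$-invariant, this last tangent remains $\eps$-close to $E^{uu}_k(\tilde p(g),g)$; transversality then follows from that of $E^{ss}_k(\tilde p)$ and $E^{uu}_k(\tilde p)$.

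The main obstacle is the bookkeeping across this sequence of perturbations. The final Proposition \ref{xw}-perturbation must be supported in a tube that is simultaneously (i) disjoint from the forward $g_0$-orbit of $q_n$, so that $T_qW^{ss}_k(p(g),g)$ is not disturbed, and (ii) small enough that the short cocycle $Dg^L(q'_m)$ does not rotate $E^{uu}_k(\tilde p(g))$ beyond the prescribed $\eps$-neighborhood. Both demands can be met by shrinking the tube until the transit from $q'_m$ to $q_n$ is entirely governed by the local linearization of $g$ around $\tilde p(g)$, and by invoking continuity of the derivative cocycle together with the fact that the relevant invariant manifolds vary $C^1$-continuously with the diffeomorphism.
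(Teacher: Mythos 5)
Your overall strategy---use the connecting lemma to create heteroclinic intersections with $\tilde p$, exploit domination near $\tilde p$ to align tangent planes, then close up to a strong homoclinic orbit of $p$---is the same strategy the paper uses. But the order in which you perform the alignment and the connection creates a genuine gap in the last step. You pre-align $T_{q'_m}W^{uu}_k$ with $E^{uu}_k(\tilde p)$ by forward iteration of $r'$, then apply the connecting lemma so that $q_n$ lies on the forward $g$-orbit of $q'_m$, and conclude that $T_qW^{uu}_k(p(g),g)=Dg^L(q'_m)\,T_{q'_m}W^{uu}_k$ stays $\eps$-close to $E^{uu}_k(\tilde p)$. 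That conclusion requires the $L$-step transit from $q'_m$ to $q$ to be governed by the linear dynamics near $\tilde p$, but Proposition~\ref{xw} gives no such control: the perturbation tube sits around the backward orbit of a non-periodic point $z$, the integer $L$ is fixed once $z$ and $\mathcal U$ are fixed, and the deflected orbit must first hit $B(z,\delta/\rho)$ before reaching $q_n$. Shrinking $\delta$ shrinks the support of the perturbation but neither shortens the transit nor confines it to a neighborhood of $\tilde p$; during that transit the cocycle $Dg^L$ can act in an essentially uncontrolled way on the $k$-plane. There is also a prior problem: since $q'_m\in W^s(\tilde p)$, its forward $g_0$-orbit converges to $\tilde p$ and therefore need not ever enter a small ball around an arbitrary non-periodic $z$, so the hypotheses of Proposition~\ref{xw} are not automatically met for your final connection.

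The paper sidesteps exactly this difficulty by performing the unstable alignment \emph{after} closing the orbit. It creates a single heteroclinic point $x\in W^{ss}_k(p)\cap W^{uu}_k(\tilde p)$, aligns the stable tangent by observing that $T_{h^{-j}(x)}W^{ss}_k(p)\to E^{ss}_k(\tilde p)$, and then picks $y$ near $h^{-2N}(x)$ on $W^{ss}_k(p)$ whose forward orbit spends $2N$ steps inside $V$ while $h^{-1}(y)\notin \text{cl}(V)$. The connecting-lemma perturbation making $y$ a strong homoclinic point, together with a local Franks perturbation at $g^{-1}(y)$ (outside $V$) producing $\ang(E^{ss}_k\oplus E^c_k(y,g),\,T_yW^{uu}_k(p(g)))>\delta_0$, leaves the $2N$-step segment inside $V$ untouched; an explicit cone-field estimate (the Claim) then forces $Dg^N(T_yW^{uu}_k(p(g)))$ into an $\eps/2$-neighborhood of $E^{uu}_k(g^N(y))$, hence of $E^{uu}_k(\tilde p)$. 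In other words, the paper controls $Dg^N$ precisely because that iteration takes place entirely inside $V$, whereas your $Dg^L$ runs through the connecting-lemma tube. To salvage your two-point construction you would have to show the final connection can be made with a transit confined to $V$ (e.g.\ via the $\lambda$-lemma and a local perturbation near $\tilde p$ rather than a generic application of Proposition~\ref{xw}), and also arrange the tube to be disjoint from the backward $g_0$-orbit of $q'_m$ as well as the forward $g_0$-orbit of $q_n$; neither is addressed in the proposal.
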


\begin{proof}
First, we fix $\eps>0$ small enough.
We may assume $\tilde{p}$ is a fixed point of $f$, replacing $f$ by an
iterate if necessary.
Now, we consider in the neighborhood $V$ of $\tilde{p}$ a continuous
splitting $T_VM = E \oplus F \oplus G$, with $\dim E = \dim G = k$ not
necessarily invariant, which extends the $Df(\tilde{p})$-invariant
partially hyperbolic splitting on $T_{\tilde{p}}M$, i.e. $E_{\tilde{p}} =
E^{ss}_{\tilde{p}}$, $F_{\tilde{p}} = E^c_{\tilde{p}}$, and $G_{\tilde{p}}
= E^{uu}_{\tilde{p}}$.

Now, we consider a neighborhood $\mathcal{U}'\subset \mathcal{U}$ of $f$
and $\delta>0$ given by Lemma~\ref{franks}.
Fixed such $\delta$, there exists $\gamma>0$ such that taking arbitrary
a $k$-dimensional isotropic subspace $G' \subset T_VM $, from
Lemma~\ref{isotropic2} there is a linear $\delta$-perturbation $A$ of the
identity map, such that $\ang (A(G'), E \oplus F)>\gamma$.

With respect to the previous decomposition fixed on $V$ we define the
strong unstable cone fields $C_{\alpha}$ on $V$\;: for $x \in V$
$$
C_{\alpha}(x)=\{w\in T_xM \colon w=w_{cs}+w_u \text{ with }
w_{cs}\in E \oplus F, w_u\in G, \text{ and }
|w_{cs}|\le \alpha |w_u|\}.
$$
We fix $\alpha>0$ such that any vector $v\in T_xM$ satisfying
$\ang(v,E_k^{ss}\oplus E_k^{c})>\gamma$ must belongs to $C_{\alpha}(x)$,
for all $x\in V$.
Now, since the decomposition is partially hyperbolic and
$Df(\tilde{p})$-invariant in $T_{\tilde{p}}M$, there exists $l>0$ such
that by taking smaller neighborhoods $\mathcal{U}''\subset \mathcal{U}'$
and $V'\subset V$ of $f$ and $\tilde{p}$, respectively, for any
$g\in \mathcal{U}''$ and any $x\in \displaystyle\bigcap_{0\leq i\leq l}
g^{-i}(V')$\;:
$$
Dg^{l}(C_\alpha(x))\subset C_{\eps}(g^{l}(x)).
$$
Also, for technical reasons, we can also suppose that the local strong
stable manifold of $\tilde{p}(g)$, $g\in \mathcal{U}''$, has two components
outside the neighborhood $V'$.

From Lemma~\ref{conexao} we can perturb $f$ to find an intersection
between the $k$-dimensional strong unstable manifold of $\tilde{p}$ and
the $k$-dimensional strong stable manifold of $p$.
See Figure a, in Figure~\ref{fig4}.
That is, there exists a diffeomorphism $h\in\mathcal{U}''$, such that there
is $x\in W^{ss}_{k}(p(h))\cap W^{uu}_{k}(\tilde{p}(h))$.
Replacing $x$ by a backward iterate we can suppose $x\in V'$, and since
$h^{-j}(x)$ converges to $\tilde{p}(h)$, after perturbation using
Lemma~\ref{isotropic2} and Lemma~\ref{franks}, if necessary, we can assume
$T_{h^{-j}(x)}(W^{ss}_{k}(p(h)))$ converges to $E^{ss}_{k}(\tilde{p}(h))$,
when $j$ goes to infinity, using partially hyperbolic splitting properties.
Moreover, we can assume the existence of open disks inside
$W^{ss}_{k}(p(h))$ containing $h^{-j}(x)$ converging in the $C^1$-topology
to the local strong stable manifold of $\tilde{p}(h)$, when $j$ goes to
infinity.

Thus, we choose a large positive integer $N_1$ such that
$T_{h^{-N_1}(x)}(W^{ss}_{k}(p(h)))$ is $\eps$-close to
$E^{ss}_{k}(\tilde{p}(h))$, and such that for every $n\geq N_1$ there are
disks $D(n)\subset W^{ss}_k(p(h))$ containing $h^{-n}(x)$ which are
$C^1$-close to the local strong stable manifold of $\tilde{p}(h)$.
See Figure b, in Figure \ref{fig4}.
Also, recall that $h^{-n}(x)$ belongs to $V$, for every $n\geq N_1$, since
$x\in W^{uu}_k(\tilde{p}(h))$.

Since the local strong stable manifold of $\tilde{p}(h)$ has two
components outside $V'$ and since $h$ restricted to this local strong
stable manifold is a contraction, we can take a point
$y\in (D(N_1+l) \cap V')\subset W^{ss}_{k}(p(h))$ such that
$h^{-1}(y)\not\in \cl(V')$ and $h^{j}(y)\in V'$ for $0\leq j\leq l$.
Once again, using Lemma~\ref{conexao} we can find a diffeomorphism
$\tilde{g}\in \mathcal{U}''$ arbitrary $C^1$-close to $h$, and $\tilde{y}$
a point arbitrary close to $y$ such that\;:
\begin{itemize}
\item[$-$] $\tilde{g}^{-1}(\tilde{y})\not\in \cl(V')$ and
$\tilde{g}^{j}(\tilde{y})\in V'$ for $0\leq j\leq l$;
\item[$-$] $\tilde{y}\in
W^{ss}_{k}(p(\tilde{g}))\cap W^{uu}_{k}(p(\tilde{g}))$; and
\item[$-$] $T_{\tilde{g}^l(\tilde{y})}W^{ss}_{k}(p(\tilde{g}))$ is
$\eps$-close to $T_{\tilde{p}(\tilde{g})}W^{ss}_k(\tilde{p}(\tilde{g}))$.
\end{itemize}
Where the last item comes from the continuously variation with respect to
the diffeomorphism of the $k$-strong stable manifold of $p$ and
$\tilde{p}$ in compact parts.

\begin{figure}[!htb]
\vspace{0,5cm}
\centering
\includegraphics[scale=0.9]{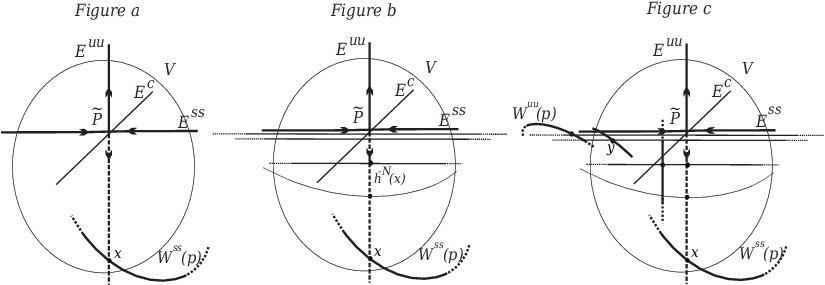}
\caption{\emph{Figure a:} connecting $W^{ss}(p)$ and $W^{uu}(\tilde{p})$;
\emph{Figure b:} taking strong iterated disks of $p$ close to $\tilde{p}$\;;
and \emph{Figure c:} connecting $W^{uu}(p)$ and $W^{ss}(p)$.}
\label{fig4}
\end{figure}

Let us remark, that for $\tilde{g}$ it is possible that there is no more
strong connection between $W^{ss}_k(\tilde{g}(p))$ and
$W^{uu}_k(\tilde{g}(\tilde{p}))$.
However, as we can see in the remainder of the proof, this is unnecessary.

Considering the isotropic subspace $T_{\tilde{y}}W^{uu}_{k}(p(g))$ in
$T_{\tilde{y}}M$, by the choice of $\delta$ and $\gamma>0$ there exists a
linear symplectic map $\tilde{A}$, $\delta$-close to the identity map,
such that $\ang (\tilde{A}(T_{\tilde{y}}W^{uu}_{k}(p(g))),
E_{\tilde{y}}\oplus F_{\tilde{y}})>\gamma$.
In particular, $A=\tilde{A}\circ D\tilde{g}(\tilde{g}^{-1}(\tilde{y}))$ is
a symplectic linear map $\delta$-close to
$D\tilde{g}(\tilde{g}^{-1}(\tilde{y}))$.
Thus, since $\tilde{g}\in \mathcal{U}''\subset \mathcal{U}'$ and
$\tilde{g}^{-1}(\tilde{y})\notin \cl(V')$ by Lemma~\ref{franks}, we can
find a diffeomorphism $g\in \mathcal{U}$  such that
\begin{itemize}
\item[$-$] $\tilde{y}\in W^{ss}_{k}(p(g))\cap W^{uu}_{k}(p(g)$;
\item[$-$] $T_{g^l(\tilde{y})}W^{ss}_{k}(p(g))$ is $\eps$-close to
$T_{\tilde{p}(g)}W^{ss}_k(\tilde{p}(g))$;
\item[$-$] $g(x)=\tilde{g}(x)$ for every $x\in V'$; and
\item[$-$] $\ang( T_{\tilde{y}}W^{uu}_{k}(p(g)), E_{\tilde{y}}\oplus
F_{\tilde{y}})> \gamma$.
In particular, $T_{\tilde{y}}W^{uu}_{k}(p(g))\subset C_{\alpha}(\tilde{y})$.
\end{itemize}

Thus, since $\tilde{g}\in \mathcal{U}''$ and $g$ coincides with
$\tilde{g}$ on $V'$, by the choice of $l$, we have
$Dg^l(T_{\tilde{y}}W^{uu}_{k}(p(g)))\in C_{\eps}(\tilde{y})$, which
implies that it is $\eps$-close to $E^{uu}_k(\tilde{p}(g))$.
Finally, by continuity of the partially hyperbolic splitting, we also have
$T_{g^l(\tilde{y})}W^{uu}_{k}(p(g))$ is $\eps$-close to
$T_{\tilde{p}(g)}W^{uu}_k(\tilde{p}(g))$.
See Figure c, in Figure \ref{fig4}.
The lemma is proved.
\end{proof}

\section{ Proof of Proposition \ref{p.estimate}}
\label{section5}
To prove Proposition~\ref{p.estimate} we use first Lemmas~\ref{lemma1} and
\ref{lemma2} to perturb a symplectic diffeomorphism $f\in \ph^1(m)$ in
order to find a symplectic $(2d-2m)$-dimensional surface containing a
hyperbolic periodic point $p$ and a segment of strong homoclinic
intersection of $p$.
After, we use arguments in spirit of those ones present in \cite{CT}, to
find a nice hyperbolic set by means of Newhouse's snake perturbations.

\begin{proof}[Proof of Proposition \ref{p.estimate}]
Let $f\in int(\ph^1(m))$ and $p$ be a diagonalizable hyperbolic periodic
point of $f$.
In order to simplify notation, let us suppose that $p$ is a fixed point.

Fixing an arbitrary $\eps>0$, by Lemma~\ref{lemma1}, after a perturbation,
we can suppose there exists a hyperbolic periodic point $\tilde{p}$ of $f$
having $d-m+1$-strong stable and unstable manifolds such that
$\ang(E^{ss}_{d-m+1}(\tilde{p}),E^{uu}_{d-m+1}(\tilde{p}))<\eps/2$.
Thus, since we have defined $(d-m+1)$-strong manifolds for $p$, we can use
Lemma~\ref{lemma2}, to find a diffeomorphism $f_1$ $C^1$-close to $f$,
such that $p(f_1)$ has a $(d-m+1)$-strong quasi-transversal homoclinic
point $q\in W^{ss}_{d-m+1}(p(f_1))\cap W^{uu}_{d-m+1}(p(f_1))$, such that
$$
\ang(T_{q}W^{ss}_{d-m+1}(p(f_1)),T_{q}W^{uu}_{d-m+1}(p(f_1))<
\frac{2\eps}{3}.
$$
Since $f_1$ is arbitrary $C^1$-close to $f$, we can suppose $p(f_1)$
still is a diagonalizable hyperbolic fixed point.

Now, we use a Pasting Lemma of Arbieto and Matheus \cite{AM} to linearize
the diffeomorphism in a small neighborhood $V$ of $p(f_1)$.
More precisely, we can find $f_2$ $C^1$-close to $f_1$ such that
$p(f_1) = p(f_2)$ and $f_2 = Df_1(p)$ in $V$ (in local coordinates).
We remark that after this perturbation, we could have no more a strong
quasi-transversal intersection between $W^{ss}_{d-m+1}(p(f_2))$ and
$W^{uu}_{d-m+1}(p(f_2))$ near $q$.
However, provided that these submanifolds varies continuously in compact
parts with respect to the diffeomorphism, this intersection could be
recovered after a local perturbation of $f_2$ in a neighborhood of $q$.

Let $T_x V = E(x) \oplus F(x) \oplus G(x)$ be a continuous extension (not
necessarily invariant) of the local linear coordinates $\R^{2d}=E^{ss}(p)
\oplus E^c (p) \oplus E^{uu}(p)$ induced by $Df_2(p) = Df_1(p)$, i.e.,
$E(p) = E^{ss}(p)$, $F(p) = E^c(p)$, and $G(p) = E^{uu}(p)$, with $F(x)$
symplectic and $E(x), G(x)$ isotropic.

For $1<m \le d$ we consider $\tilde{E}^c(q) \subset E^c(q)$ a
$(2m-2)$-dimensional symplectic subspace having trivial intersection with
$T_qW^{ss}_{d-m+1} \oplus T_qW^{uu}_{d-m+1}$.
We set $\tilde{E}^c(f^j_2(q)) := Df^j_2(\tilde{E}^c(q)) \subset
E^c(f_2^j(q))$.

We remark now that the local strong stable and unstable manifolds of $p$
coincide with their strong directions restrict to $V$, since $f_2$ is
linear in this neighborhood.
That is, the local strong stable (resp. unstable) manifold of $p$ is
$E^{ss}_{d-m+1}(p)\cap V$ (resp. $E^{uu}_{d-m+1}(p)\cap V$).
Thus, since $q$ is a strong  homoclinic point, for any large positive
integer $k$\;:
$$
f_2^{k}(q)\in E^{ss}_{d-m+1}(p)\cap V \text{ and } f_2^{-k}(q)\in
E^{uu}_{d-m+1}(p)\cap V.
$$

In the reminder of this proof by abuse of notation we denote by $p$ all
its continuations with respect to nearby diffeomorphisms and we denote the
$(d-m+1)$-strong directions and manifolds of $p$ only by $E^*(p)$ and
$W^{*}(p)$, $* = ss, uu$, respectively.
The same for the $(2m-2)$-central direction $E^c(p)$.

\begin{lemma}
There exists a symplectic diffeomorphism $f_3$ $C^1$-close to $f_2$, a
positive integer $K$, a neighborhood $V' \subset V$ of $p$, and small
neighborhoods $U_{-K},\ U_K\subset V'$ of $f_3^{-K}(q)$ and $f_3^{K}(q)$,
respectively, such that
\begin{itemize}
\item $f_3=Df_3(p)=Df_1(p)$ is still linear on $V'$ (in local
coordinates);
\item $f_3^{2K}( (T_{f_{3}^{-K}(q)}W^{ss}(p,f_3) \oplus
T_{f_{3}^{-K}(q)} W^{uu}(p,f_3)) \cap U_{-K}) \subset
(T_{f_{3}^{K}(q)} W^{ss}(p,f_3) \oplus T_{f_{3}^{K}(q)}
W^{uu}(p,f_3) ) \cap U_K$.
\end{itemize}
\label{keypoint}
\end{lemma}

\begin{proof}
First we remark that after a local perturbation, if necessary, we can
suppose that for any large positive integer $k$,
$$
Df_2^{-k}(T_{q}W^{ss}(p))\cap (\tilde{E}^c(f_2^{-k}(q)) \oplus
T_{f_2^{-k}(q)}W^{uu}(p))=0,
$$
and
$$
Df_2^{k}(T_{q}W^{uu}(p))\cap (T_{f_2^{k}(q)}W^{ss}(p) \oplus
\tilde{E}^c(f_2^{k}(q)))=0.
$$
It follows from the dominated splitting properties that
$Df_2^{-k}(T_{q}W^{ss}(p))$  (resp. $Df_2^{k}(T_{q}W^{uu}(p))$ converges
to $E^{ss}(p)$ (resp. $E^{uu}(p)$) when $k$ goes to infinity.
Hence, we can choose a large positive integer $K$ such that both
$Df_2^{-K}(T_{q}W^{ss}(p))$ and $Df_2^{K}(T_{q}W^{uu}(p))$ are close
enough to $E(f_2^{-K}(q))$ and $E(f_2^{K}(q))$, respectively.
Since for any hyperbolic periodic point of a symplectic map its stable
(resp. unstable) manifold is a Lagrangian submanifold, see
Remark~\ref{dec.}, we have that $T_x W^{s}(p)$ (resp. $T_x W^{u}(p)$) is a
Lagrangian subspace for any $x\in W^{s}(p)$ (resp. $W^{u}(p)$).
In particular $T_x W^{ss}(p)$ (resp. $T_x W^{uu}(p)$) is an isotropic
subspace, see Remark~\ref{decomp.}, for any $x\in W^{ss}(p)$ (resp.
$W^{uu}(p)$).

Hence, from Lemma~\ref{isotropic2} there is a symplectic linear map
$\tilde{B}$ on $\R^{2d}$, $C^1$-close to $Id$, such that
$$
\tilde{B}(Df_2^{-K}(T_{q}W^{ss}(p))) = E(f_2^{-K}(q))
\text{ and } \tilde{B}| (\tilde{E}^c(f_2^{-K}(q)) \oplus
T_{f_2^{-K}(q)}W^{uu}(p))) = Id.
$$
Thus, taking $B=Df_2\circ \tilde{B}^{-1}$, we can use Franks Lemma to
perform a local $C^1$-perturbation $f_{2,1}$ of $f_2$ in a neighborhood
$U_{2,1}$ of $f_2^{-K}(q)$, if necessary, such that $f_{2,1} = f_2$ in
$\{f_2^{-K}(q)\} \cup (M \setminus U_{2,1})$ and
$$
Df_{2,1}^{-K}(T_{q}W^{ss}(p))= E(f_{2,1}^{-K}(q)) = E(f_{2}^{-K}(q)).
$$
Moreover, this perturbation does not change the action of $Df_2$ over
$\tilde{E}^c(O(q)) \oplus T_{O(q)}W^{uu}(q)$.

On the other hand, as before, we can perform a local $C^1$-perturbation
$f_{2,2}$ of $f_{2,1}$ in a neighborhood $U_{2,2}$ of $f_{2,1}^{K-1}(q) =
f_2^{K-1}(q)$, if necessary, such that $f_{2,2} = f_{2,1} = f_2$ in
$\{f_2^{K-1}(q)\} \cup (M \setminus U_{2,2})$ and
$$
Df_{2,2}^{2K}(T_{f_2^{-K}(q)}W^{uu}(p)))=
G(f_{2,2}^{K}(q)),
$$
and keeps the action of $Df_{2,1}$ over $\tilde{E}^c(O(q)) \oplus
T_{O(q)}W^{ss}(q,f_{2,1})$.
In particular $E(f_2^{-K}(q)) = T_{f_{2,2}^{-K}(q)} W^{ss}(p)$
We take $V' \subset V$, neighborhood of $p$ such that $f_{2,2}^{-K}(q),
f_{2,2}^{K}(q)\in V'$ and $U_{2,1} \cup U_{2,2} \cap V = \emptyset$.
Thus, $f_{2,2}$ is still linear on $V'$, which implies:
$$
Df_{2,2}^{2K}(T_{f_{2,2}^{-K}(q)} W^{ss}(p) \oplus
T_{f_{2,2}^{-K}(q)} W^{uu}(p))=
T_{f_{2,2}^{K}(q)} W^{ss}(p) \oplus  T_{f_{2,2}^{K}(q)} W^{uu}(p)).
$$
Note that the above perturbations do not change neither the orbit of $q$
nor of $p$.

Finally, we use the Pasting Lemma to get a symplectic diffeomorphism $f_3$
arbitrarily close to $f_{2,2}$ that linearizes $f_{2,2}^{2K}$ in a small
neighborhood $U\subset V'$ of $f_{2,2}^{-K}(q)$ such that
$$
f_3^{2K}((E(f_{3}^{-K}(q)) \oplus T_{f_{3}^{-K}(q)} W^{uu}(p)) \cap U) =
(T_{f_{2}^{K}(q)} W^{ss}(p) \oplus  G(f_3^{K}(q))) \cap V'.
$$

To finish the proof of the lemma we take $U_{-K}=U$ and $U_K=f_3^{2K}(U)$.
\end{proof}

\begin{lemma}
\label{keypoint2}
There is a symplectic diffeomorphism $f_4$, $C^1$-close to $f_3$, such that
$f_4^{2K}(T_{f_{4}^{-K}(q)}W^{ss}(p,f_3)) \cap T_{f_{4}^{K}(q)}
W^{uu}(p,f_4)$ contains a segment of line I.
\end{lemma}

\begin{figure}[!htb]
\vspace{0,5cm}\centering
\includegraphics{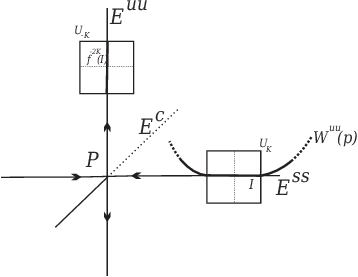}
\caption{Creating a $(2d-2m)$-dimensional surface containing an interval
of strong homoclinic intersections.}
\label{fig6}
\end{figure}

\begin{proof}
Since strong invariant manifolds vary continuously in compact parts
with respect to the diffeomorphism, we have
$\ang(T_qW^{ss}(p,f_{3}), T_qW^{uu}(p,f_{3}))<\eps$ whenever
$f_{3}$ is sufficiently $C^1$-close to $f_1$.
Let $u\in T_{q}W^{ss}(p,f_{3})$ and $w\in T_{q}W^{uu}(p,f_{3})$ be
unit vectors such that $\ang(u,w)<\eps$ and define $E$ and $W$ the
one-dimensional subspaces generated by $u$ and $w$, respectively.
In particular, $E$ and $W$ are near isotropic subspaces.
Also recall that $T_qW^{ss}(p,f_{3}), T_qW^{uu}(p,f_{3})$ is a
symplectic space (see \cite[Section 4]{BV}).

Thus, it follows from Lemma~\ref{isotropic2} there is a symplectic linear
map $A$ defined on $T_q W^{ss}(p,f_{3})\oplus T_qW^{uu}(p,f_{3})$ such
that $A(W)=E$.
Note that $A$ is $C^1$-close to $Id|(T_q W^{ss}(p,f_{3})\oplus
T_qW^{uu}(p,f_{3}))$ whenever $\eps$ is sufficiently small.
By Lemma~\ref{symplectic} there is a symplectic linear map
$L \colon T_qM\rightarrow T_qM$ such that $L|(T_q W^{ss}(p,f_{3})\oplus
T_qW^{uu}(p,f_{3})) = A$ and $L| \tilde{E}^c(q)=Id$.
Then, using Franks Lemma we make a local perturbation of $f_{3}$ in a
neighborhood of $f_{3}^{-1}(q)$ to get a symplectic diffeomorphism $f_4$
$C^1$-close to $f_{3}$ (and so to $f_2$) such that $Df_4(f_4^{-1}(q)) =
L\circ Df_{3}(f_{3}^{-1}(q))$.
Which implies that $T_q W^{ss}(p,f_4)\cap T_qW^{uu}(p,f_4)$ is non
trivial.
We stress that the above local perturbation keeps unchanged the orbits of
$p$ and $q$.

Since $T_q W^{ss}(p,f_4)\cap T_qW^{uu}(p,f_4)$ is non-trivial, there is an
interval of strong homoclinic points, that is $f_4^{2K}(T_{f_{4}^{-K}(q)}
W^{uu}(p,f_4))\cap U) \cap T_{f_{4}^{K}(q)} W^{ss}(p,f_4)$ contains a
segment of line $I$.
The proof is finished.
\end{proof}

Let $f_4$ be $C^1$-close to $f_3$ (and so to $f_2$) as given by
Lemma~\ref{keypoint2}.
After a perturbation, if necessary, we assume that
$T_qW^{ss}(p,f_4)\cap T_qW^{uu}(p,f_4)$ is an one-dimensional subspace.
Hence, if $I$ is sufficiently small this holds for every $x\in I$\;:
$T_xW^{ss}(p,f_4)\cap T_xW^{uu}(p,f_4)$ is an one-dimensional subspace for
every $x\in I$.

What follows is the construction of a local perturbation of the identity
map in $U_K$ to finish the proof of Proposition~\ref{p.estimate}.
In order to simplify notation let us set $\tilde{E}^{ss}(f_4^K(q)) :=
T_{f_4^K(q)}W^{ss}(p,f_3)$, $\tilde{E}^{uu}(f_4^K(q)) :=
T_{f_4^K(q)}W^{uu}(p,f_3)$.
In $U_K$ we consider linear local coordinates given by the extension of
partial hyperbolic decomposition of $\tilde{E}^{ss}(f_4^K(q)) \oplus
\tilde{E}^c(f_4^K(q)) \oplus \tilde{E}^{uu}(f_4^K(q))$.
That is, for every $x\in U_K$ there exists $x^s\in
\tilde{E}^{ss}(f_4^K(q))$, $x^c\in \tilde{E}^c(f_4^K(q))$, and
$x^u\in \tilde{E}^{uu}(f_4^K(q))$ such that
$x=(x^s,x^c, x^u)$.
By construction, in these coordinates $f_4^K(q)=(0,0,0)$.

Now, let $E^s_1 = Df_3^K(E)$ ($E$ as in the proof of Lemma~\ref{keypoint2})
be the one-dimensional subspace of $\tilde{E}^{ss}(f_4^K(q))$
containing the interval $I$ and denote $E^s_2 = Df_3^K(E^s(q))$.
So, $\tilde{E}^{ss}(f_4^K(q))=E^s_1\oplus E^s_2$.
Similarly, we denote $E^u_1 = Df_3^K(W)$ and $E^u_2 = Df_3^K(E^u(q))$.
Thus, $\tilde{E}^{uu}(f_4^K(q)) = E^u_1\oplus E^u_2$ and
$E^s_1 \oplus E^u_1$ is a symplectic subspace.
According to these last direct sums, for $x^s\in \tilde{E}^{ss}
(f_4^K(q))$ and $x^u\in \tilde{E}^{uu}(f_4^K(q))$ we write
$x^s=(x^s_1,x^s_2)$ and $x^u=(x^u_1,x^u_2)$.

Let $N$ be a large positive integer and $\delta>0$ an arbitrary small
real number.
Using the Pasting Lemma of \cite{AM}, we find a symplectic
diffeomorphism $\Theta_N \colon M\rightarrow M$, $\delta$-$C^1$-close to
$Id$, $\Theta_N=Id$ in the complement of $U_K$, and such that, for $r>0$
small enough and $x \in B(f_4(q),r)\subset U_K$,
$$
\Theta_N(x_1^s, x_2^s, x^c,x_1^u, x^u_2) =
\left(x_1^s, x_2^s,\ x^c,\,x^u_1+A\cos\frac{\pi x^s_1 N}{2r},
x^u_2\right),
$$
where $A=\displaystyle\frac{2Rr\delta }{\pi N}$, $R$ a constant
depending only on the symplectic coordinate on $U_K$.
Note that $\Theta_N(I)\cap I$ contains $N$ distinct points and that
$\Theta_N(\tilde{E}^{ss}(f_4^K(q)) \oplus \tilde{E}^{uu}(f_4^K(q))
\cap U_K) \subset \tilde{E}^{ss}(f_4^K(q))\oplus \tilde{E}^{uu}(f_4^K(q))
\cap U_K$.

Now, we use $\Theta_N$ to get a symplectic perturbation of $f_4$.
We set $f_{4,N}=\Theta_N \circ f_4$ which is $\delta$-$C^1$-close to $f_4$
and satisfies
\begin{itemize}
\item $f_{4,N}=f_4$ in the complement of $f_4^{-1}(U_K)$;
\item $f_{4,N}^{2K}\big( (T_{f_{4}^{-K}(q)}W^{ss}(p,f_3) \, \oplus \,
T_{f_{4}^{-K}(q)} W^{uu}(p,f_3)) \cap U_{-K}\big) \subset
\big(T_{f_{4}^{K}(q)} W^{ss}(p,f_3) \\
\oplus \, T_{f_{4}^{K}(q)}
W^{uu}(p,f_3) \big) \cap U_K$;
\item $f_{4,N}^{2K}\big(T_{f_{4}^{-K}(q)} W^{uu}(p,f_4)\cap U_{-K}\big)
\cap (T_{f_{4}^{K}(q)} W^{ss}(p,f_4)\cap U_K)$ contains at least $N$
distinct points.
\end{itemize}
Note that, by construction, $f_{4,N}^{2K}\big(T_{f_{4}^{-K}(q)}
W^{uu}(p,f_4) \cap U_{-K}\subset E^{uu}(p)$ and also $T_{f_{4}^{K}(q)}
W^{ss}(p,f_4) \cap U_K \subset E^{ss}(p)$.
Since we are considering $V'$ in local coordinates where $f_4$ is linear,
these sets belong to $W^{uu}(p)$ and $W^{ss}(p)$ for $f_{4,N}$,
respectively.

This perturbation is a kind of Newhouse's snake perturbation for higher
dimensions, i.e., it destroys the interval of homoclinic intersections and
creates $N$ transversal homoclinic points for $p$ inside $U_K$.
See Figure~\ref{fig7}.
\begin{figure}[!htb]
\centering{\includegraphics{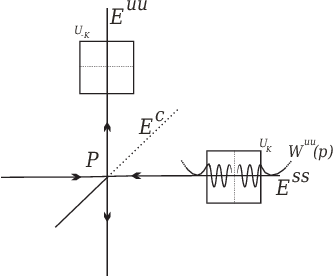}}
\caption{Newhouse's snakes.}
\label{fig7}
\end{figure}

After we have found these strong homoclinic points in $U_K$, we follow the
arguments developed in step 3 of the proof of  \cite[Proposition~3.1]{CT},
in order to find a hyperbolic set $\Lambda$ satisfying the proposition.
Here $\Lambda\cap V' \subset (E^{ss}(p)\oplus E^{uu}(p))$ and this is a
key point.
For completeness we sketch of how we use the arguments in \cite{CT}.

First, we choose a positive integer $t$ and a rectangle $D_t$ in
$(E^{ss}(p)\oplus E^{uu}(p))\cap U_K$ containing the $N$ transversal
homoclinic points obtained above and also that $g^{t}(D_{t})\cap D_{t}$
has $N$ disjoint connected components.
We also require that $t$ is the smallest possible such that $D_t$ is
$(A/2)$-$C^1$-close to $E^{ss}(p)\cap U_K$ and $f_{4,N}^t(D_t)$ is
$(A/2)$-$C^1$ close to the connected component of $W^{uu}(p)\cap U_K$
containing the $N$ transversal homoclinic points.
Note, $t$ goes to infinity when $N$ goes to infinity.

Therefore, the maximal invariant set in the orbit of $D_t$ is a hyperbolic
set $\Lambda(p,N)$ is conjugated to a product of shift maps, with
topological entropy
$$
h_{top}(f_{4,N}|\Lambda(p,N)) = \displaystyle\frac{1}{t}\log N.
$$

Since the dynamics of $f_{4,N}$ is linear on $V'$ and
$\Lambda(p,N)$ belongs to $V'\subset E^{ss}(p)\oplus E^{uu}(p)$ it follows
straightforward from the proof of \cite[Lemma~4.2]{CT} an upper estimate
for $A$, as follows.

\begin{lemma}
\label{afirma}
For $A$ and $t$ defined as before, there exists a positive integer $K_1$
independent of $A$, such that
$$
A<K_1 \max \{ \|Df_{4,N}^{-t}|E^{uu}(p)\|,\, \|Df_{4,N}^{t}|E^{ss}(p)\|\}.
$$
\end{lemma}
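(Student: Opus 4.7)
The plan is to exploit three ingredients simultaneously: (i) the map $f_{4,N}$ coincides with the linear map $Df(p)$ on $V$ away from the snake region $B(q,r)$; (ii) the rectangle $D_t$ and its image $f_{4,N}^t(D_t)$ are, respectively, $A/2$-$C^1$-close to the strong stable direction $E^{ss}$ and to the strong unstable manifold $W^{uu}$ at $p$; and (iii) $t$ is the smallest integer admitting such a $D_t$. Together these reduce the lemma to a linear-algebraic width comparison under iteration of $Df(p)$, combined with a minimality argument.

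First I would analyze the orbit segment of length $t$ starting in $D_t$: its two ends lie in $V$, where $f_{4,N}$ equals $Df(p)$ off $B(q,r)$, while its middle, which traverses the homoclinic loop outside $V$, has length bounded by a fixed constant depending only on the geometry of the orbit of $q$. Consequently $Df_{4,N}^t$ along the orbit factors as $Df(p)^t$ composed with a uniformly bounded number of near-identity corrections (from $D\Theta$, with $\|D\Theta-Id\|\le\delta$) together with a bounded contribution from the out-of-$V$ segment; all such factors produce only a constant multiplicative factor that will be absorbed into $K_1$.

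Next I would translate the two $C^1$-closeness conditions into quantitative width estimates: $D_t$ being $A/2$-close to $E^{ss}$ controls the transverse $E^{uu}$-extent of $D_t$ by at most $A/2$ times its $E^{ss}$-extent, and symmetrically $f_{4,N}^t(D_t)$ being $A/2$-close to $W^{uu}$ controls its transverse $E^{ss}$-extent by $A/2$ times its $E^{uu}$-extent. Propagating the widths through the essentially linear action of $Df(p)^t$ on $E^{ss}\oplus E^{uu}$ gives the necessary condition
\[
A^{2} \;\gtrsim\; \|Df_{4,N}^t|E^{ss}(p)\|\cdot\|Df_{4,N}^{-t}|E^{uu}(p)\|
\]
for such a $D_t$ to exist at time $t$. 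The minimality of $t$ forces the analogous condition at time $t-1$ to \emph{fail}, so $A^2$ is strictly bounded above by a constant multiple of $\|Df_{4,N}^{t-1}|E^{ss}(p)\|\cdot\|Df_{4,N}^{-(t-1)}|E^{uu}(p)\|$. Since $Df(p)$ has a bounded spectrum, the $(t-1)$-product differs from the $t$-product by a constant factor depending only on the eigenvalues at $p$; combining this with $\sqrt{ab}\le\max\{a,b\}$ yields the claimed inequality with $K_1$ depending only on those eigenvalues, on $\delta$, and on the geometry of the homoclinic loop.

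The main obstacle is this bookkeeping step: ensuring that the constants produced (from the snake correction, the out-of-$V$ excursion, the coordinate-change factor $R$, and the one-step eigenvalue ratios) combine into a single integer $K_1$ that is truly independent of $A$ and of $N$. This rests on the uniform $\delta$-$C^1$-bound on $\Theta$ built into the snake construction and on the fact that the length of the out-of-$V$ portion of the orbit is bounded uniformly in $t$ and $N$.
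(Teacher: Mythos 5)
The paper does not actually prove this lemma: it is stated as a direct import of Lemma~4.2 of \cite{CT} and used as a black box, so there is no internal proof to compare against. Judged on its own, your argument correctly identifies the relevant mechanisms (linearity of $f_{4,N}$ on $V$ away from the snake box, translation of the $A/2$-$C^1$-closeness into contraction bounds, one-step comparison between times $t-1$ and $t$), but the minimality step contains a genuine logical gap.

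You derive $\|Df_{4,N}^t|E^{ss}(p)\|\cdot\|Df_{4,N}^{-t}|E^{uu}(p)\|\lesssim A^2$ as a \emph{necessary} consequence of the existence of a valid $D_t$, and then assert that minimality of $t$ forces this same inequality to fail at time $t-1$. That inference is invalid: minimality says that no admissible rectangle $D_{t-1}$ \emph{exists}, which does not imply that any particular necessary consequence of its existence must fail. Concretely, the two $C^1$-closeness requirements decouple into two separate bounds of the shape $\|Df_{4,N}^{-t}|E^{uu}(p)\|\lesssim A$ and $\|Df_{4,N}^{t}|E^{ss}(p)\|\lesssim A$ (up to fixed geometric constants); at time $t-1$ minimality guarantees only that \emph{at least one} of them fails, so $\max\{\|Df_{4,N}^{-(t-1)}|E^{uu}(p)\|,\|Df_{4,N}^{t-1}|E^{ss}(p)\|\}\gtrsim A$. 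Their \emph{product} can still be $\ll A^2$ at time $t-1$ if one factor is much smaller than $A$ while the other barely exceeds it, so the inequality you try to reverse need not fail there. The clean argument works with the max throughout: the failure at $t-1$ gives a lower bound on the max at $t-1$, and a bounded one-step ratio converts this into $A<K_1\max\{\|Df_{4,N}^{-t}|E^{uu}(p)\|,\|Df_{4,N}^{t}|E^{ss}(p)\|\}$, with no detour through the product and no use of $\sqrt{ab}\le\max\{a,b\}$. Your product-based route could in principle be rescued here by invoking the symplectic pairing of eigenvalues (so that $\|Df_{4,N}^{t}|E^{ss}(p)\|$ and $\|Df_{4,N}^{-t}|E^{uu}(p)\|$ are comparable and the product essentially equals $\max^2$), but you do not invoke that pairing, and without it the argument as written does not close.
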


Finally, given a positive integer $k$, we can choose $N$ large enough such
that by the choice of $A$ and Lemma~\ref{afirma}
\begin{equation}
\label{e.min}
\displaystyle\frac{1}{t}\log N >
\min\left\{\frac{1}{t}\log\|Df_{4,N}^{-t}|E^{uu}(p)\|^{-1},\,
\frac{1}{t}\log\|Df_{4,N}^{t}|E^{ss}(p)\|^{-1} \right\} -\frac{1}{2k}.
\end{equation}
Since $f_{4,N}=f_4$ in $V'$ for every $N$, when $t$ goes to infinity the
minimum in \eqref{e.min} goes to the smallest positive Lyapunov exponent
of $p$ restrict to $E^{ss}(p)\oplus E^{uu}(p)$.
As $\dim (E^{ss}\oplus E^{uu}) = 2(d-m+1)$, the smallest positive
Lyapunov exponent of $p$ restrict to this subspace is  equal to
$\log\sigma(Df_{4,N}|E_m^c(p))$.
Thus, taking $g=f_{4,N}$ for $N$ large enough, we have that
$$
h_{top}(g|\Lambda(p,N))> \log\sigma(Dg|E_m^c(p))-\frac{1}{n}.
$$

In the general case, when $p$ is a hyperbolic periodic point, we also can
create a strong homoclinic intersection between $W^{ss}(p)$ and
$W^{uu}(p)$, as before, and thus repeating the above arguments for
$\tilde{f}=f^{\tau(p,f)}$ we can find a nice hyperbolic set
$\tilde{\Lambda}(p,N)$ of a symplectic diffeomorphism
$\tilde{g}=g^{\tau(p,f)}$, such that $g$ is $C^1$-close to $f$, satisfying
$$
h_{top}(\tilde{g}|\tilde{\Lambda}(p,N))> \log\sigma(D\tilde{g}|E_m^c(p))-
\frac{1}{n}.
$$

Therefore the proposition is proved, since the hyperbolic set
$$
\Lambda(p,N)=\bigcup_{0\leq i< \tau(p,f)} g^i(\tilde{\Lambda}(p,N))
$$
of $g$ has topological entropy:
$$
h_{top}(g|\Lambda(p,N))=\frac{1}{\tau(p,f)}
h_{top}(\tilde{g}|\tilde{\Lambda}(p,N)).
$$
The proof is complete.
\end{proof}


\begin{thebibliography}{99}
\bibitem[AM]{AM} A. Arbieto  and  C. Matheus. A pasting lemma and some applications for conservative systems. {\it Ergodic Theory and Dynamical Systems}, 27, no. 5, (2007), 1399-1417.

\bibitem[A]{A} M-C. Arnaud. The generic symplectic $C^1$-diffeomorphisms of four-dimensional symplectic manifolds are hyperbolic, partially hyperbolic or have a completely elliptic periodic point, {\it Ergodic Theory and Dynamical Systems} 22 (2002) 1621-1639, 2002.




\bibitem[ABC]{ArBC} M-C. Arnaud, C. Bonatti, and S. Crovisier. Dynamiques symplectiques génériques, {\it Ergodic Theory and Dynamical Systems} 25 (2005) 1401-1436, 2010.


\bibitem[ABW]{ABW} A. Avila, J. Bochi, and A. Wilkinson. Nonuniform center bunching and the genericity of ergodicity among $C^1$ partially hyperbolic symplectomorphisms.  {\it Ann. Scient. Éc. Norm. Sup.} 42 (2009) 931-979.









\bibitem[BV]{BV} J. Bochi and M. Viana, Lyapunov exponents: How frequently are dynamical systems hyperbolic?, {\it Modern Dynamical Systems and Applications}, Cambridge Univ. Press, Cambridge, (2004), 271?297.


\bibitem[BC]{BC} C. Bonatti and S. Crovisier. Recurrence et generecite. Inv. Math. 158 (2004), 33-104









\bibitem[BDP]{BDP} C. Bonatti, L. Diaz, and E. Pujals. A $C^1$-generic dichotomy for diffeomorphisms: Weak forms of hyperbolicity or infinitely many sinks or sources,  {\em Annals of Mathematics}, 158 (2003), 355-418.







\bibitem[CT]{CT} T. Catalan and A. Tahzibi. A lower bound for topological entropy of generic non-Anosov
symplectic diffeomorphisms, {\em Ergodic Theory and Dynamical Systems}, 34 (2014), 1503-1524.










\bibitem[DW]{DW} D. Dolgopyat, A. Wilkinson.
Stable accessibility is $C^1$ dense.
Geometric methods in dynamics. II.
{\it Ast\'erisque } No. 287 (2003), xvii, 33-60.








\bibitem[F]{F} J. Franks. Necessary conditions for stability of diffeomorphisms. Trans. A.M.S. 158
(1971), 301-308.


\bibitem[H]{H} S. Hayashi. Connecting Invariant Manifolds and the Solution of the $C^1$ Stability and $C^1$-Stability Conjectures for Flows. The Annals of Mathematics, Second Series, Vol. 145, No. 1 (Jan., 1997), pp. 81-137 .


\bibitem[HPS]{HPS} M. Hirsch, C. Pugh, and M. Shub, Invariant manifolds. Bull. AMS 76 (1970), 1015-1019.


\bibitem[HT]{HT} V. Horita and A. Tahzibi. Partial hyperbolicity for symplectic diffeomorphisms,
{\it Ann. I. H. Poincar\'e}- 23 (2006), 641-661.














\bibitem[N1]{N3} S. E. Newhouse. Topological entropy and Hausdorff dimension for area preserving diffeomorphisms of surfaces,
\emph{ Société Mathématique de France, Astérisque}, 51 (1978 ), 323-334.


\bibitem[N2]{N2}  S. E. Newhouse.  Quasi-elliptic periodic points in conservative dynamical systems,
\emph{ American Journal of Mathematics}, 99(5) (1975), 1061-1087.











\bibitem[SX]{SX} R. Saghin  and Z. Xia.  Partial hyperbolicity or dense elliptic periodic points for $C^1-$generic symplectic diffeomorphisms, {\em Transactions of the American Mathematical Society}, v. 358, n. 11, 5119-5138, 2006.



\bibitem[XW]{XW}  L. Wen and Z. Xia, $C^1$ connecting lemmas, {\it Trans. A.M.S.}, Vol. 352, No. 11 (2000) pp. 5213-5230.

\bibitem[X]{X}  Z. Xia, Homoclinic Points in Symplectic and Volume-Preserving Diffeomorphisms, {\it Com. in Math. Physics}, 177, (1996) pp. 435-449.

\end{thebibliography}

\bigskip

\flushleft

{\bf Thiago Catalan} (tcatalan\@@famat.ufu.br)\\
Faculdade de Matem\'{a}tica, FAMAT/UFU \\
Av. Jo\~ao Naves de Avila, 2121\\
38.408-100, Uberl\^andia,MG, Brazil

\bigskip

\flushleft

{\bf Vanderlei Horita} (vhorita\@@ibilce.unesp.br)\\
Departamento de Matem\'{a}tica, IBILCE/UNESP \\
Rua Crist\'{o}v\~{a}o Colombo 2265\\
15054-000 S. J. Rio Preto, SP, Brazil

\end{document}